\numberwithin{equation}{section}
\numberwithin{figure}{section}
\theoremstyle{plain}
\newtheorem{theorem}{Theorem}
\newtheorem{proposition}{Proposition}
\newtheorem{lemma}{Lemma}
\numberwithin{theorem}{section}
\newtheorem{remark}{Remark}
\numberwithin{proposition}{section}
\numberwithin{lemma}{section}
\numberwithin{remark}{section}
\def\E{\mathbb{E}}
\def\R{\mathbb{R}}
\def\P{\mathbb{P}}
\def\D{\mathbb{D}}
\def\p{\partial}
\def\ud{\mathrm{d}}
\newcommand{\F}{\mathcal{F}}
\def\E{\mathbb{E}}
\def\R{\mathbb{R}}
\def\P{\mathbb{P}}
\def\ve{\varepsilon}
\def\l{\ell}
\def\H{\mathcal{H}}
\newcommand{\dd}{\mathrm{d}}
\begin{document}

\title{Strong approximation of time-changed stochastic differential equations involving drifts with random and non-random integrators\thanks{Kei Kobayashi's research was partially supported by a Faculty Fellowship at Fordham University.}
}
\date{}

\author{Sixian Jin\thanks{Fordham University. Email:\ sjin27@fordham.edu} \ and Kei Kobayashi\thanks{Corresponding author. Fordham University. Email:\ kkobayashi5@fordham.edu}}


\maketitle

\begin{abstract}
The rates of strong convergence for various approximation schemes are investigated for a class of stochastic differential equations (SDEs) which involve a random time change given by an inverse subordinator. SDEs to be considered are unique in two different aspects: i) they contain two drift terms, one driven by the random time change and the other driven by a regular, non-random time variable; ii) the standard Lipschitz assumption is replaced by that with a time-varying Lipschitz bound. The difficulty imposed by the first aspect is overcome via an approach that is significantly different from a well-known method based on the so-called duality principle.
On the other hand, the second aspect requires the establishment of a criterion for the existence of exponential moments of functions of the random time change.\\ 
\textbf{Keywords:} Stochastic differential equation, Numerical approximation, Rate of convergence, Inverse subordinator, Random time change, Time-changed Brownian motion.\\
\textbf{MSC(2010)}: 65C30 \and 60H10
\end{abstract}

\section{Introduction}\label{section_introduction}

Let $B=(B_t)_{t\ge0}$ be a standard Brownian motion and $E=(E_t)_{t\ge0}$ be a stochastic process defined by the inverse of a stable subordinator $D=(D_t)_{t\ge0}$ with index $\beta\in(0,1)$, independent of $B$. 
The composition process $B\circ E=(B_{E_t})_{t\ge0}$, called a \textit{time-changed Brownian motion}, and its various generalizations have been widely used to model ``subdiffusions'' arising in many different areas of science; 
see e.g.\ Chapter 1 of \cite{HKU-book}. 
There are a number of characteristics of the time-changed Brownian motion which make it fundamentally different from the regular Brownian motion. For example, $B\circ E$ is non-Markovian 
and 
its variance is $\E[B_{E_t}^2]=c t^\beta$ for some $c>0$, 
which shows that in large time scales
particles represented by $B\circ E$ diffuse more slowly than the regular Brownian particles. 
Moreover, each of these subdiffusive particles frequently gets trapped and become immobile for some time.
Further, the densities of $B\circ E$ satisfy the time-fractional heat 
  equation $\partial_t^\beta u(t,x) =(1/2)\Delta u(t,x)$, where $\partial_t^\beta$ denotes the Caputo fractional derivative of order $\beta$.
Various extensions of $B\circ E$ and their associated fractional order partial differential equations have been investigated, including time-changed fractional Brownian motions (see \cite{HKRU,HKU-2,MNX}).

This paper investigates 
stochastic differential equations (SDEs) of the form
\begin{align}\label{SDE-001-0}	
	\ud X_t=H(E_t,X_t)\, \ud t+ F(E_t,X_t)\, \ud E_t+G(E_t,X_t)\,\ud B_{E_t} 
	\, \ \textrm{with} \, \ X_0=x_0. 
\end{align}
One of the main difficulties of handing this SDE is the simultaneous presence of the two drift coefficients $H$ and $F$, with the former driven by the regular, non-random time variable and the latter driven by the random time change. 
Detailed analysis of the ``time-changed SDE'' \eqref{SDE-001-0} with L\'evy noise terms added first appeared in \cite{Kobayashi}, and 
since then, the time-changed SDE \eqref{SDE-001-0} and its extensions 
have drawn more and more attention. 
For example, Nane and Ni \cite{NaneNi2017,NaneNi2018} and Wu \cite{Wu}
established stability in various senses of solutions of time-changed SDEs; to overcome the difficulty of the simultaneous presence of the two drifts, they utilized extensions of the time-changed It\^o formula derived in \cite{Kobayashi}. 
On the other hand, the main contribution of this paper is to establish the rates of convergence for numerical approximation schemes for the time-changed SDE \eqref{SDE-001-0} with $H(E_t,X_t)=H(E_t)$, which is extremely important both theoretically and practically.

Let us suppose for the moment that $H\equiv 0$. Then the time-changed SDE can be effectively analyzed via the so-called \textit{duality principle} in Theorem 4.2 of \cite{Kobayashi}, which connects \eqref{SDE-001-0} with $H\equiv 0$ with the classical It\^o SDE 
\begin{align}\label{SDE-003-0}	
	\ud Y_t= F(t,Y_t)\, \ud t+ G(t,Y_t)\, \ud B_t \, \ \textrm{with} \, \ Y_0=x_0
\end{align}
in the following manner: if $Y_t$ solves \eqref{SDE-003-0}, then $X_t:=Y_{E_t}$ solves \eqref{SDE-001-0}, while if $X_t$ solves \eqref{SDE-001-0}, then $Y_t:=X_{D_t}$ solves \eqref{SDE-003-0}, where $D$ is the original subordinator. 
Indeed, Hahn et al.\ \cite{HKU-1} employed this one-to-one correspondence 
to establish a time-fractional pseudo-differential 
equation associated with a time-changed SDE involving jumps. 
The duality principle was also used 
in \cite{JumKobayashi} to discuss Euler--Maruyama-type approximation for SDE  \eqref{SDE-001-0} with $H\equiv 0$ under the standard Lipschitz assumption on the coefficients. The same approach was also employed in the more recent paper \cite{DengLiu2020} for semi-implicit Euler--Maruyama-type approximation for SDEs with superlinearly growing coefficients.

Let us now assume $H\not\equiv 0$. 
This paper derives extensions of the strong convergence result in \cite{JumKobayashi} while investigating the following non-trivial question: 
\begin{itemize}
\item[\textbf{(A)}] If $H\not\equiv 0$ and the coefficients $F$, $G$ and $H$ satisfy the Lipschitz assumption with a time-varying Lipschitz bound (as in Assumption $\mathcal{H}_1$ in Section \ref{section_3}), can we still obtain a convergence result similar to the one derived in \cite{JumKobayashi}?
\end{itemize}
Let us emphasize that \textit{the duality principle becomes powerless in the simultaneous presence of the two drifts $H$ and $F$, even with the standard Lipschitz assumption.}
This is because the corresponding classical SDE \eqref{SDE-003-0} would involve a term driven by $D$, and hence, $Y$ would depend on $E$, making the discussions provided in the above-cited papers \cite{DengLiu2020,HKU-1,JumKobayashi} no longer valid.

To overcome the difficulty, this paper utilizes a Gronwall-type inequality with a stochastic driver to control the moments of the error processes, which is \textit{intrinsically different from the approach taken in the above-mentioned papers \cite{NaneNi2017,NaneNi2018,Wu} in handling the simultaneous presence of the two drifts.}
Our method builds upon and extends the ideas presented in \cite{JinKobayashi} in dealing with a different type of time-changed SDEs of the form 
$
	\ud X_t=F(t,X_t)\, \ud E_t+ G(t,X_t)\, \ud B_{E_t}.
$
However, to answer Question \textbf{(A)} on the time-changed SDE \eqref{SDE-001-0}, we must appropriately modify that approach in order to deal with (i) the simultaneous presence of the two drifts $H$ and $F$ and (ii) the generalized Lipschitz assumption.   
To handle (ii), we derive a useful criterion for the existence of the exponential moments of functions of $E_t$ 
in Theorem \ref{expmoment}. The criterion generalizes Theorem 1 of \cite{JinKobayashi} and may be of independent interest to some readers.

We further investigate the following question: 
\begin{itemize}
\item[\textbf{(B)}] Can we improve the rate of convergence for the time-changed SDE \eqref{SDE-001-0} using an It\^o--Taylor-type approximation scheme? 
\end{itemize}
Even though we give a positive answer to this question only in the one-dimensional case and under the assumption that $H\equiv 0$, it is still a non-trivial generalization of the result established in \cite{JumKobayashi}. 
Indeed, Remark 3.2(4) of \cite{JumKobayashi} points out that a simple modification of their argument 
by using the duality principle together with the It\^o--Taylor scheme 
would \textit{not} lead to any improvement of the rate of convergence. 
It turns out that the approach taken for Question \textbf{(A)}, which completely avoids the use of the duality principle, enables us to tackle Question \textbf{(B)} effectively.

The rest of the paper is organized as follows. Section \ref{section_2} defines the wide class of random time changes to be considered in this paper and derives criteria for the existence and non-existence of various moments of the time changes. Section \ref{section_3} discusses the meaning of the time-changed SDE \eqref{SDE-001-0} and derives sufficient conditions for the existence of moments of the SDE solution. 
Using these results, 
Sections \ref{section_4} and \ref{section_5} establish the main theorems which answer Questions \textbf{(A)} and \textbf{(B)}. Section \ref{section_6} is devoted to numerical examples which verify the statements derived in Section \ref{section_4}.

\section{Inverse subordinators and their moments}\label{section_2}

Throughout the paper, 
$(\Omega,\F,\P)$ denotes a complete probability space, $\E$ denotes the expectation under $\P$, and all stochastic processes are defined on $(\Omega,\F,\P)$. 
Let $D=(D_t)_{t\ge 0}$ be a subordinator starting at $0$ with Laplace exponent $\psi$ with killing rate 0, drift 0, and L\'evy measure $\nu$; i.e.\ $D$ is a one-dimensional nondecreasing L\'evy process with c\`adl\`ag paths starting at 0 with Laplace transform 
$\mathbb{E}[e^{-sD_t}]=e^{-t\psi(s)}$,
where $\psi(s)=\int_0^\infty (1-e^{-sy})\,\nu(\ud y)$
with the condition
$\int_0^\infty (y\wedge 1)\, \nu(\ud y)<\infty$. 
\textit{We focus on the case when the L\'evy measure $\nu$ is infinite} (i.e.\ $\nu(0,\infty)=\infty$).
Let $E=(E_t)_{t\ge 0}$ be the \textit{inverse} (or \textit{hitting time process}) of $D$ defined by 
\[
	E_t:=\inf\{u>0; D_u>t\}, \ \ t\ge 0. 
\]
We call $E$ an \textit{inverse subordinator}. If the subordinator $D$ is stable with index $\beta\in(0,1)$, then $\psi(s)=s^\beta$ and $E$ is called an \textit{inverse $\beta$-stable subordinator.}   
The assumption that $\nu(0,\infty)=\infty$ implies that $D$ has strictly increasing paths with infinitely many jumps (see e.g.\ \cite{Sato}), and therefore, $E$ has continuous, nondecreasing paths starting at 0. Also, the inverse relation between $D$ and $E$ implies $\{E_t>x\}=\{D_x<t\}$ for all $t,x\ge 0$; see Figure \ref{figure_001a}. 
Note that the jumps of $D$ correspond to the (random) time intervals on which $E$ is constant, and during those constant periods, any time-changed process of the form  $X\circ E=(X_{E_t})_{t\ge 0}$ also remains constant. If $B$ is a standard Brownian motion independent of $D$, we can regard particles represented by the time-changed Brownian motion $B\circ E$ as being trapped and immobile during the constant periods; see Figure \ref{figure_001}.
 Note that even though $B\circ D$ is a L\'evy process, $B\circ E$ is not even a Markov process (see \cite{MS_1,MS_2}).

\begin{figure}
    \centering
\vspace{-120pt}
   \includegraphics[width=4.5in]{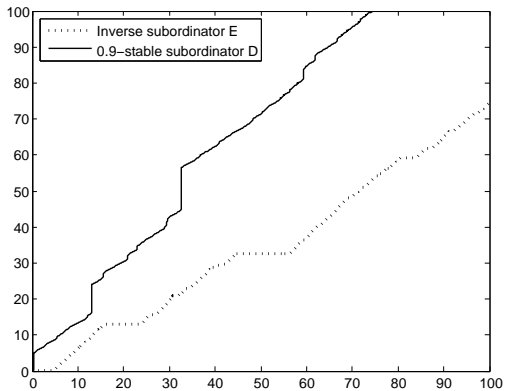}
\vspace{-120pt}
    \caption{Sample paths of a $0.9$-stable subordinator $D$ and the corresponding inverse $E$.}
    \label{figure_001a}
\end{figure}

\begin{figure}
    \centering
    \includegraphics[width=3.3in]{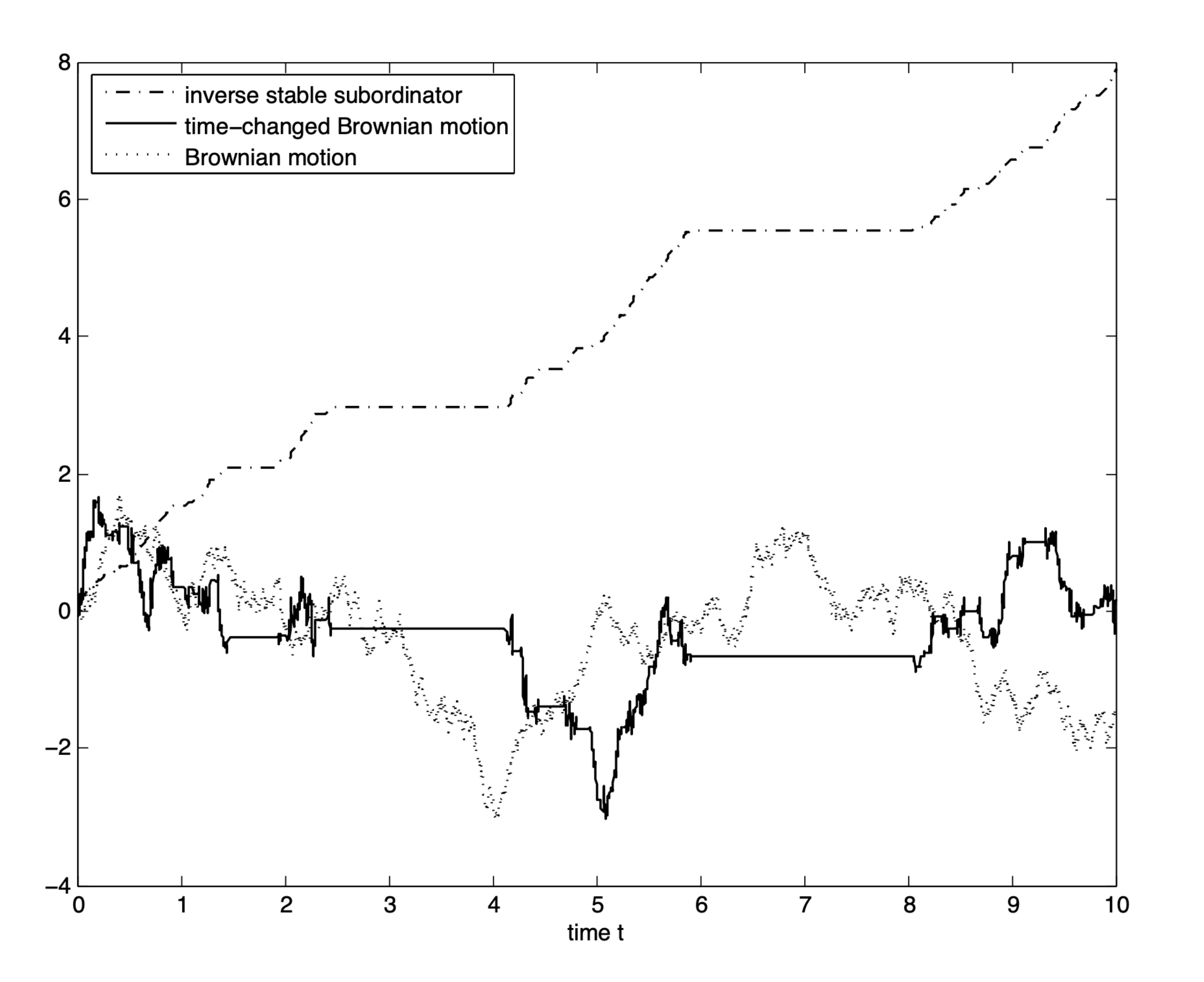}
    \caption{Sample paths of an inverse $0.8$-stable subordinator $E$ (dash-dotted) and the corresponding time-changed Brownian motion $B\circ E$ (solid), compared to the sample path of the underlying Brownian motion $B$ (dotted).}
    \label{figure_001}
\end{figure}

To describe the wide class of inverse subordinators $E$ to be discussed in this paper, let us introduce the notion of regularly varying and slowly varying functions. 
A function $f:(0,\infty)\to (0,\infty)$ is said to be \textit{regularly varying at $\infty$ with index $p\in\mathbb{R}$} if 
$
	\lim_{s\to \infty} f(cs)/f(s)=c^p
$
for any $c>0$. 
Let $\mathrm{RV}_p(\infty)$ denote the class of regularly varying functions at $\infty$ with index $p$.
A function $\ell:(0,\infty)\to (0,\infty)$ is said to be \textit{slowly varying at $\infty$} if $\ell\in \mathrm{RV}_0(\infty)$ 
(i.e.\ $\ell\in \mathrm{RV}_p(\infty)$ with $p=0$).
Every $f\in\mathrm{RV}_p(\infty)$ is represented as 
$
	f(s)=s^p \ell(s)
$
 with $\ell\in\mathrm{RV}_0(\infty)$.
Note that the following two Laplace exponents are regularly varying at $\infty$ with index $\beta\in(0,1)$: $\psi(s)=s^\beta$, which corresponds to a stable subordinator with index $\beta$, and $\psi(s)=(s+\kappa)^\beta-\kappa^\beta$ with $\kappa>0$, which corresponds to an exponentially tempered (or tilted) stable subordinator with index $\beta$ and tempering factor $\kappa$. 
On the other hand, $\psi(s)=\log(1+s)$, which corresponds to a Gamma subordinator, is slowly varying at $\infty$.

For the reader's convenience, we list some properties of regularly varying functions that are used throughout this paper (see Propositions 1.5.1 and 1.5.7 of \cite{Bingham_book}).
Let $f(0):=\lim_{s\to 0}f(s)$ and $f(\infty):=\lim_{s\to \infty}f(s)$ for a given function $f$ defined on $(0,\infty)$.

\begin{lemma}\label{lemma_RV} 
\begin{enumerate}[(i)] 
\item Given $f\in \mathrm{RV}_p(\infty)$, $f(\infty)=\infty$ if $p>0$, and $f(\infty)= 0$ if $p<0$. 
\item If $f_i\in \mathrm{RV}_{p_i}(\infty)$ for $i=1,2$ and $f_2(\infty)=\infty$, then $f_1\circ f_2\in \mathrm{RV}_{p_1p_2}(\infty)$.
\item If $f_i\in \mathrm{RV}_{p_i}(\infty)$ for $i=1,2$, then $f_1\cdot f_2\in \mathrm{RV}_{p_1+p_2}(\infty)$.  
\end{enumerate}
\end{lemma}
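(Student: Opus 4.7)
Property (iii) is immediate from multiplicativity of limits: $(f_1f_2)(cs)/(f_1f_2)(s) = [f_1(cs)/f_1(s)]\cdot[f_2(cs)/f_2(s)] \to c^{p_1}\cdot c^{p_2}$, so I would dispose of it first. Properties (i) and (ii) need quantitative information about slowly varying functions beyond the bare definition, so my plan is to invoke two standard facts about any $\ell\in\mathrm{RV}_0(\infty)$: (a) for every $\delta>0$, $s^{-\delta}\ell(s)\to 0$ and $s^{\delta}\ell(s)\to\infty$ as $s\to\infty$, which follows from the Karamata representation $\ell(s)=\exp\bigl(\eta(s)+\int_a^s \varepsilon(u)/u\,\dd u\bigr)$ with $\eta(s)\to c\in\mathbb{R}$ and $\varepsilon(u)\to 0$; and (b) the Uniform Convergence Theorem, namely that for any $f\in\mathrm{RV}_p(\infty)$ and any compact $[a,b]\subset(0,\infty)$, the convergence $f(us)/f(s)\to u^p$ is uniform in $u\in[a,b]$ as $s\to\infty$.

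For (i), write $f(s)=s^p\ell(s)$ with $\ell\in\mathrm{RV}_0(\infty)$. If $p>0$, choose $\delta\in(0,p)$ and decompose $f(s)=s^{p-\delta}\cdot\bigl(s^{\delta}\ell(s)\bigr)$; both factors tend to $\infty$ by (a). If $p<0$, choose $\delta\in(0,-p)$ and write $f(s)=s^{p+\delta}\cdot\bigl(s^{-\delta}\ell(s)\bigr)$; by (a) and the sign of $p+\delta$, both factors tend to $0$.

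For (ii), I would start from
\[
\frac{f_1(f_2(cs))}{f_1(f_2(s))} = \frac{f_1\bigl(f_2(s)\,\lambda(c,s)\bigr)}{f_1(f_2(s))}, \qquad \lambda(c,s):=\frac{f_2(cs)}{f_2(s)}.
\]
By hypothesis on $f_2$, $\lambda(c,s)\to c^{p_2}>0$ and $f_2(s)\to\infty$ because $f_2(\infty)=\infty$. Fixing any compact interval $[a,b]\subset(0,\infty)$ containing $c^{p_2}$, we have $\lambda(c,s)\in[a,b]$ for all sufficiently large $s$, so by (b) applied to $f_1$ with $t:=f_2(s)\to\infty$ and $u:=\lambda(c,s)\in[a,b]$,
\[
\frac{f_1\bigl(f_2(s)\,\lambda(c,s)\bigr)}{f_1(f_2(s))} \longrightarrow (c^{p_2})^{p_1} = c^{p_1p_2},
\]
which is the desired identity.

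The main obstacle is precisely the Uniform Convergence Theorem invoked in (ii); it is indispensable because $f_1$ must be evaluated at the $s$-dependent argument $f_2(s)\lambda(c,s)$ rather than at a fixed scalar multiple of $f_2(s)$, so pointwise regular variation of $f_1$ alone does not suffice. Since a full proof of that theorem requires the Karamata representation together with a measurability (Baire-category) argument, in the write-up I would not reproduce those details but simply cite Propositions 1.5.1 and 1.5.7 of \cite{Bingham_book} and present only the three short deductions above.
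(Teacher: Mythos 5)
Your proposal is correct and matches the paper's treatment: the paper states Lemma \ref{lemma_RV} without proof, citing Propositions 1.5.1 and 1.5.7 of \cite{Bingham_book}, and your three deductions (the multiplicativity computation for (iii), the decomposition $f(s)=s^{p\mp\delta}\cdot s^{\pm\delta}\ell(s)$ plus Potter-type bounds for (i), and the Uniform Convergence Theorem for (ii)) are precisely the standard arguments underlying those cited propositions. In particular you correctly identified that the UCT is indispensable for (ii) because the multiplier $\lambda(c,s)=f_2(cs)/f_2(s)$ is $s$-dependent, so pointwise regular variation of $f_1$ alone does not yield the limit.
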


In Proposition \ref{proposition_RV} and Theorem \ref{expmoment} below, we provide important criteria for the existence and non-existence of various moments concerning inverse subordinators, which play key roles in the proofs of the statements to be established in Sections \ref{section_3}-\ref{section_5}.
Proposition \ref{proposition_RV} states that \textit{any} inverse subordinator with the underlying L\'evy measure being infinite has exponential moment; for proofs, see \cite{JumKobayashi,MagdziarzOrzelWeron}.

\begin{proposition}\label{proposition_RV}
Let $E$ be the inverse of a subordinator with infinite L\'evy measure. Then $\mathbb{E}[e^{\lambda E_t}]<\infty$ for any $\lambda>0$ and $t> 0$. 
Consequently, if $f:(0,\infty)\to(0,\infty)$ is a measurable function regularly varying at $\infty$ with index $p>0$ such that $\sup_{s\le s_0} f(s)<\infty$ 
for any $s_0<\infty$, then $\E[f(E_t)]<\infty$ for any $t>0$. 
\end{proposition}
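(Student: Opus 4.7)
The plan is to exploit the inverse relation $\{E_t>x\}=\{D_x<t\}$ to convert tail estimates on $E_t$ into lower-tail estimates on $D_x$, which can in turn be controlled via the explicit Laplace transform $\mathbb{E}[e^{-sD_x}]=e^{-x\psi(s)}$. The crucial structural input is that the infinite Lévy measure assumption forces $\psi(s)\to\infty$ as $s\to\infty$: by monotone convergence,
\[
\lim_{s\to\infty}\psi(s)=\lim_{s\to\infty}\int_0^\infty(1-e^{-sy})\,\nu(\dd y)=\int_0^\infty 1\,\nu(\dd y)=\nu(0,\infty)=\infty.
\]
This divergence of $\psi$ is what makes the exponential moment finite; without it, arbitrarily large $\lambda$ would ruin integrability.

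For the first assertion, I would fix $\lambda>0$ and $t>0$, and use the tail formula
\[
\E[e^{\lambda E_t}]=1+\lambda\int_0^\infty e^{\lambda x}\P(E_t>x)\,\dd x.
\]
A Chernoff-style estimate with arbitrary $s>0$ gives
\[
\P(E_t>x)=\P(D_x<t)=\P(e^{-sD_x}>e^{-st})\le e^{st}\E[e^{-sD_x}]=e^{st-x\psi(s)}.
\]
Since $\psi(s)\to\infty$, I choose $s$ so large that $\psi(s)>2\lambda$; then $e^{\lambda x}\P(E_t>x)\le e^{st}e^{-\lambda x}$, which is integrable on $(0,\infty)$, and the expectation is finite.

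For the second assertion, the game is to dominate the regularly varying function $f$ by an exponential on the tail and then apply the first part. Writing $f(s)=s^p\ell(s)$ with $\ell\in\mathrm{RV}_0(\infty)$, Potter's bounds give, for any $\delta>0$, a constant $M$ and $s_1$ with $f(s)\le M s^{p+\delta}$ for $s\ge s_1$. Since $s^{p+\delta}=o(e^{\lambda s})$ as $s\to\infty$ for every $\lambda>0$, there exists $C_\lambda$ with $f(s)\le C_\lambda e^{\lambda s}$ for $s\ge s_1$. Combined with the assumed boundedness $\sup_{s\le s_1}f(s)<\infty$, this yields $f(s)\le A+C_\lambda e^{\lambda s}$ on all of $(0,\infty)$ for some constant $A$, and the first part immediately gives $\E[f(E_t)]\le A+C_\lambda\E[e^{\lambda E_t}]<\infty$.

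The main obstacle, and really the only substantive content, is the first part: one must verify both the divergence $\psi(\infty)=\infty$ (used to handle arbitrary $\lambda$) and the correct direction of Chebyshev/Markov applied to the negative exponential of $D_x$, so that the bound genuinely decays in $x$. Once the exponential-moment bound is in place, the reduction of the regularly-varying case to the exponential case is routine via Potter-type asymptotics and the local boundedness hypothesis.
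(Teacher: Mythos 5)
Your proof is correct. The paper itself does not prove Proposition \ref{proposition_RV}; it defers to the references \cite{JumKobayashi,MagdziarzOrzelWeron}. The argument you give — use the inverse relation $\{E_t>x\}=\{D_x<t\}$, bound the lower tail of $D_x$ by Markov's inequality applied to $e^{-sD_x}$ to get $\P(E_t>x)\le e^{st-x\psi(s)}$, and then use $\psi(\infty)=\infty$ (which follows from $\nu(0,\infty)=\infty$ by monotone convergence) to choose $s$ so that $\psi(s)$ exceeds $\lambda$ — is the standard route and is the one taken in those references. The reduction of the regularly varying case to the exponential case via $f(s)=O(s^{p+\delta})=o(e^{\lambda s})$ on the tail together with the assumed local boundedness near $0$ is sound; one could also invoke Lemma \ref{lemma_RV}(i) directly in place of Potter's bounds, but that is only a cosmetic difference.
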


Even though $E_t$ has exponential moment, its power $E_t^p$ with $p>1$ may or may not have exponential moment. For instance, if $E$ is an inverse $\beta$-stable subordinator, then 
 $\mathbb{E}[e^{\lambda E_t^2}]$ exists if $1/2<\beta<1$ while it does not if $0<\beta<1/2$. When $\beta=1/2$, whether the expectation exists or not depends on the relationship between $\lambda$ and $t$. 
For details, see Remark 6(2) of \cite{JinKobayashi}.
The following theorem generalizes Theorem 1 of \cite{JinKobayashi}.

\begin{theorem}
\label{expmoment}
Let $E$ be the inverse of a subordinator $D$ whose Laplace exponent $\psi$ is regularly varying at $\infty$ with index $\beta\in[0,1)$. 
If $\beta=0$, assume further that $\psi(\infty)=\infty$ and $\psi'$ is regularly varying at $\infty$ with index $-1$.
Suppose $f:(0,\infty)\to(0,\infty)$ is a measurable function regularly varying at $\infty$ with index $p>0$ such that $\sup_{s\le s_0} f(s)<\infty$ for any $0<s_0<\infty$.
Fix $t> 0$.
\begin{enumerate}[(i)]
\item If 
$p<1/(1-\beta)$, or equivalently, $\beta>(p-1)/p$, 
then $\mathbb{E}[e^{f(E_t)}]<\infty$. 
\item If 
$p>1/(1-\beta)$, or equivalently, $\beta<(p-1)/p$,
then $\mathbb{E}[e^{f(E_t)}]=\infty$. 
\end{enumerate}
\end{theorem}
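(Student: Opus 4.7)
The plan is to reduce the theorem to matching exponential asymptotics for $\P(E_t > x)$ as $x \to \infty$ and then compare the resulting rate with the growth of $f$; the identity $\{E_t > x\} = \{D_x < t\}$ converts this into a left-tail estimate for $D_x$ at times $x$ large relative to the fixed $t$. First I would establish an upper bound via the exponential Chebyshev inequality
\[
\P(E_t > x) \leq e^{st}\,\E[e^{-sD_x}] = \exp\bigl(st - x\psi(s)\bigr), \qquad s > 0,
\]
optimizing in $s$ by choosing $s^{*} = s^{*}(x)$ with $\psi'(s^{*}) = t/x$. By the monotone density theorem applied to the $\mathrm{RV}_{\beta}(\infty)$ function $\psi$ (valid because $\psi'$ is decreasing by concavity of $\psi$), one has $s\psi'(s) \sim \beta\,\psi(s)$ as $s\to\infty$, so
\[
-\log \P(E_t > x) \geq (1-\beta)\, x\, \psi(s^{*}(x))\,(1+o(1)).
\]
The hypothesis $\psi' \in \mathrm{RV}_{\beta-1}(\infty)$---imposed directly when $\beta = 0$ and a consequence of the monotone density theorem when $\beta \in (0,1)$---makes $s^{*}(x) = (\psi')^{-1}(t/x)$ regularly varying with index $1/(1-\beta)$, and Lemma \ref{lemma_RV} then places $x\,\psi(s^{*}(x))$ in $\mathrm{RV}_{1/(1-\beta)}(\infty)$. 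This yields $\P(E_t > x) \leq \exp(-c(t)\, x^{1/(1-\beta)} L_E(x))$ for some slowly varying $L_E$. Writing $\E[e^{f(E_t)}]$ as an integral against the law of $E_t$ and splitting at a large $M$, the hypothesis $f(x) \sim x^{p} L_f(x)$ with $p < 1/(1-\beta)$ makes $e^{f(x)}$ negligible against this tail decay for large $x$ (slowly varying factors cannot close any strict power gap), which proves (i).

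For (ii) I would pair this upper bound with a matching lower bound produced by a Bahadur--Rao / exponential-tilting argument centered at the rate $s^{*}$, giving
\[
\P(E_t > x) \geq \exp\bigl(-c'(t)\, x^{1/(1-\beta)} L_E(x)\bigr)
\]
up to at most a polynomial-in-$x$ prefactor. Combined with $f(x) \geq c_0 x^{p} L_f(x)$ and $p > 1/(1-\beta)$, this gives $e^{f(x_n)}\,\P(E_t \in (x_n,x_n+1]) \to \infty$ along a suitable sequence $x_n \to \infty$ (using that the infinite-L\'evy-measure assumption makes $E_t$ absolutely continuous with strictly positive density on $(0,\infty)$), and the bound
\[
\E[e^{f(E_t)}] \geq \sum_n e^{f(x_n)}\,\P(E_t \in (x_n,x_n+1])
\]
then diverges.

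The hardest step will be the lower tail bound for $D_x$: unlike the stable case treated in Theorem 1 of \cite{JinKobayashi}, no self-similarity or closed-form density is available, so the exponential change of measure must be controlled while tracking the slowly varying factors appearing in $\psi$ and $\psi'$. Fortunately, the statement excludes the critical equality $p = 1/(1-\beta)$, so the slowly varying corrections in $L_E$ and $L_f$ need not be matched precisely between the upper and lower bounds---only the leading power $x^{1/(1-\beta)}$ matters, and the strict inequality $p \neq 1/(1-\beta)$ suffices to decide integrability regardless of the particular slowly varying factors.
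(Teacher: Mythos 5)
Your approach is genuinely different from the paper's and substantially more ambitious. The paper's proof is a short reduction: it uses the regular variation of $f$ together with Lemma \ref{lemma_RV}(i) to squeeze $f(u)$ between powers $u^q$ — choosing $q \in (p, 1/(1-\beta))$ for (i) (so $f(u) < u^q$ eventually because $f(u)u^{-q} \in \mathrm{RV}_{p-q}(\infty)$ tends to $0$) and $q \in (1/(1-\beta), p)$ for (ii) (so $f(u) > u^q$ eventually) — and then invokes Theorem 1 of \cite{JinKobayashi}, which already settles the question for $f(u) = u^q$, as a black box. Your proposal instead re-derives the tail asymptotics of $E_t$ from scratch via the Legendre transform of $\psi$: the exponential Chebyshev/saddle-point computation on $\P(D_x < t)$, the use of the monotone density theorem to get $s\psi'(s) \sim \beta\psi(s)$ and hence $\psi' \in \mathrm{RV}_{\beta-1}(\infty)$, the identification $s^\ast(x) \in \mathrm{RV}_{1/(1-\beta)}(\infty)$, and a Bahadur--Rao type matching lower bound. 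This is precisely the mechanism underneath the cited theorem, so your route is more self-contained and explains \emph{why} the threshold $1/(1-\beta)$ appears, whereas the paper's proof buys brevity by outsourcing that work.

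That said, the hardest step you flag is a genuine gap, not just extra labor. The upper bound via Chebyshev is clean, but the lower bound $\P(D_x < t) \geq \exp\bigl(-c'(t)x^{1/(1-\beta)}L(x)\bigr)$ does not follow from Bahadur--Rao without control of $\psi''$ (or a local limit estimate for $D_x$ under the tilted measure), and the paper makes no assumption on $\psi''$. A workable substitute would be a de Bruijn-type Tauberian theorem (\cite{Bingham_book}, Theorem 4.12.9), which converts the asymptotics $-\log\E[e^{-sD_x}] = x\psi(s) \in \mathrm{RV}_\beta(\infty)$ into the matching two-sided small-deviation asymptotic; that is likely the route \cite{JinKobayashi} takes. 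Also, the step where you need $E_t$ to be absolutely continuous with strictly positive density is neither obvious from the hypotheses nor necessary: once $f(u) \geq u^q$ for $u \geq M_2$ with $q > 1/(1-\beta)$, you can bound $\E[e^{f(E_t)}] \geq \int_{M_2}^\infty e^{u^q}\,\P(E_t\in\ud u)$ directly, which is the same reduction the paper makes. If you carry out the Tauberian argument carefully, your proof would be a valid (and instructive) standalone alternative, but as written it still leans on an unproven lower tail estimate.
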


\begin{proof} 
For any fixed $M>0$, $\int_0^M e^{f(u)}\,\P(E_t\in \ud u)\le e^{\sup_{u\le M} f(u)}<\infty$ by assumption. 
Hence, for large enough $M>0$, the integrability of $\mathbb{E}[e^{f(E_t)}]$ coincides with that of $\int_M^\infty e^{f(u)}\,\P(E_t\in\ud u)$.

If $p<1/(1-\beta)$, then there exist $q\in(p,1/(1-\beta))$ and $M_1>0$ such that $f(u)<u^q$ for all $u\ge M_1$. Indeed, for $q>p$, since $f(u)u^{-q}\in \mathrm{RV}_{p-q}(\infty)$, it follows from Lemma \ref{lemma_RV}(i) that $f(u)u^{-q}\to 0$ as $u\to\infty$.
Then $\int_{M_1}^\infty e^{f(u)}\,\P(E_t\in \ud u)\le \int_{M_1}^\infty e^{u^q}\,\P(E_t\in \ud u)\le \mathbb{E}[e^{E_t^q}]$, which is finite due to Theorem 1(1) of \cite{JinKobayashi}, so $\mathbb{E}[e^{f(E_t)}]<\infty$. On the other hand, if $p>1/(1-\beta)$, then again by Lemma \ref{lemma_RV}(i), there exist $q\in(1/(1-\beta),p)$ and $M_2>0$ such that $f(u)>u^q$ for all $u\ge M_2$. Since $\mathbb{E}[e^{E_t^q}]=\infty$ due to Theorem 1(2) of \cite{JinKobayashi}, it follows that $\int_{M_2}^\infty e^{f(u)}\,\P(E_t\in\ud u)\ge \int_{M_2}^\infty e^{u^q}\,\P(E_t\in\ud u)=\infty$. Thus, $\mathbb{E}[e^{f(E_t)}]=\infty$.
\end{proof}

Theorem \ref{theorem_negative} below concerning the moments of negative orders of $E_t$ (i.e.\ $\E[1/E_t^{p}]$ for $p>0$) can be regarded as a counterpart of Proposition \ref{proposition_RV} and Theorem \ref{expmoment}. 
To state the theorem in a more general setting, note that a function $f:(0,\infty)\to(0,\infty)$ is called \textit{regularly varying at 0 with index $p\in\R$}  if 
$
	\lim_{s\downarrow 0} f(cs)/f(s)=c^p
$
for any $c>0$, which is equivalent to the statement that $\tilde{f}\in\mathrm{RV}_{-p}(\infty)$ with $\tilde{f}(x):=f(1/x)$. 
The proof is based on the small ball probability of $E_t$ that is established in \cite{Kobayashi_smallball} or obtained immediately from a result in \cite{Rosinski_isomorphism}. 
Note that for any $t>0$ and $f:(0,\infty)\to(0,\infty)$, it follows that $f(E_t)$ is well-defined and positive a.s.\ since $E_t>0$ a.s.\ (or otherwise, the underlying subordinator $D$ would not start at 0).

\begin{theorem}\label{theorem_negative}
Let $E$ be the inverse of a subordinator with infinite L\'evy measure $\nu$. 
Suppose $f:(0,\infty)\to(0,\infty)$ is a measurable function regularly varying at 0 with index $p>0$ such that $\inf_{s\ge s_0} f(s)>0$ for any $0<s_0<\infty$.
Fix $t>0$.
\begin{enumerate}[(i)]
\item If $p<1$, then $\mathbb{E}[1/f(E_t)]<\infty$. 
\item If $p>1$ and $\nu[t,\infty)>0$, then
$\mathbb{E}[1/f(E_t)]=\infty$. \end{enumerate}
\end{theorem}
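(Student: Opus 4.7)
The plan is to reduce both parts of the theorem to moment estimates of negative powers $E_t^{-\alpha}$ and then to apply the small ball probability bounds for $E_t$ cited from \cite{Kobayashi_smallball} and \cite{Rosinski_isomorphism}. First I would decompose
\[
\E[1/f(E_t)] = \E[\mathbf{1}_{\{E_t\ge 1\}}/f(E_t)] + \E[\mathbf{1}_{\{E_t<1\}}/f(E_t)],
\]
and observe that the first summand is at most $1/\inf_{s\ge 1}f(s)<\infty$ by hypothesis; hence only the behavior of $f$ near $0$ is relevant, and both parts become statements about $\E[\mathbf{1}_{\{E_t<1\}}/f(E_t)]$.

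Next, since $f\in\mathrm{RV}_p(0)$ with $p>0$, I would write $f(s)=s^p\ell(s)$ with $\ell$ slowly varying at $0$ and invoke Potter's theorem to produce, for any prescribed $\delta>0$, constants $A,B,s_0>0$ satisfying $As^{p+\delta}\le f(s)\le Bs^{p-\delta}$ on $(0,s_0]$. The task then reduces to deciding whether $\E[\mathbf{1}_{\{E_t\le s_0\}}E_t^{-\alpha}]$ is finite, with $\alpha=p+\delta<1$ (choosing $\delta$ small) in part (i) and $\alpha=p-\delta>1$ in part (ii).

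Writing $F(u):=\P(E_t\le u)$, Tonelli's theorem yields
\[
\E[\mathbf{1}_{\{E_t\le s_0\}}E_t^{-\alpha}]=\int_0^\infty F\bigl(\min(s_0,v^{-1/\alpha})\bigr)\,\ud v = s_0^{-\alpha}F(s_0)+\int_{s_0^{-\alpha}}^\infty F(v^{-1/\alpha})\,\ud v,
\]
so the whole matter is governed by the behavior of $F(u)$ for small $u$. Part (i) follows from the universal upper bound $F(u)\le Cu$ for $0<u\le s_0$ (valid for any subordinator, obtained for instance by splitting $D$ into its small-jump part of jumps of size $\le t/2$ and its large-jump part, then applying Markov's inequality to the former and a jump-count bound to the latter), which gives $\int_{s_0^{-\alpha}}^\infty Cv^{-1/\alpha}\,\ud v<\infty$ whenever $\alpha<1$. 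Part (ii) uses the matching lower bound $F(u)\ge cu$ for $0<u\le s_0$, obtained from $\P(E_t\le u)=\P(D_u\ge t)\ge 1-e^{-u\nu[t,\infty)}\ge(1-e^{-1})u\nu[t,\infty)$ as soon as $u\nu[t,\infty)\le 1$; this is precisely where the hypothesis $\nu[t,\infty)>0$ is used. Inserting this lower bound into the Fubini identity produces $\int_{s_0^{-\alpha}}^\infty cv^{-1/\alpha}\,\ud v=\infty$ whenever $\alpha>1$.

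The main obstacle I anticipate is not the moment analysis itself but cleanly invoking the two small ball estimates under the stated general hypotheses: the upper bound $F(u)\lesssim u$ must be available without any extra regularity assumption on $\psi$, while the lower bound must be tied \emph{sharply} to $\nu[t,\infty)>0$ (without this condition, $D$ can avoid all large jumps on the scale of $t$, making $E_t$ much more concentrated away from $0$ and permitting counterexamples to part (ii)). Once those two small ball ingredients are pulled from \cite{Kobayashi_smallball} or extracted from \cite{Rosinski_isomorphism}, the Potter bound reduction and the Fubini calculation above finish the two parts in complete parallel.
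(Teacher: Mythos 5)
Your proposal is correct and its skeleton matches the paper's: both reduce the general regularly varying $f$ to power functions $u^{\pm\alpha}$ near $0$ (you via Potter-type bounds, the paper via Lemma 2.1(i), which are the same tool), both reduce $\E[E_t^{-\alpha}]$ to a tail-probability integral by Fubini/change of variables, and both close the argument with the small-ball behavior $\P(E_t\le u)\asymp \nu[t,\infty)u$ as $u\downarrow 0$. The one place where you genuinely deviate is that you \emph{derive} the small-ball bounds from scratch rather than importing the asymptotic $\P(E_t\le u)\sim\nu[t,\infty)u$ from \cite{Kobayashi_smallball,Rosinski_isomorphism}: your upper bound $\P(D_u\ge t)\le Cu$ via the split into a small-jump part (controlled by Markov, using $\int_0^{t/2}y\,\nu(\dd y)<\infty$ and $\nu[1,\infty)<\infty$) and a large-jump part (Poisson counting), and your lower bound $\P(D_u\ge t)\ge 1-e^{-u\nu[t,\infty)}\ge(1-e^{-1})\nu[t,\infty)u$ for $u\nu[t,\infty)\le 1$, are both correct and elementary. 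That makes your proof self-contained where the paper's leans on a cited isomorphism identity, at the cost of getting only two-sided constants rather than the exact first-order asymptotic. Everything else is a cosmetic reordering (you compare $f$ to a power first and then compute the power moment; the paper computes the power moment first and then compares), so the two proofs are essentially the same argument with a different treatment of the small-ball ingredient.
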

 
\begin{proof}
We first prove the statement in the special case when $f(u)=u^p$ for $u>0$. Observe that $\E\left[1/E_t^p\right]$ can be expressed as
\begin{align*}
	\int_0^\infty \P\left(\frac{1}{E_t^p}>x\right)\ud x
	=\int_0^\infty  \P(E_t<x^{-1/p})\ud x
	=\int_0^\infty p u^{-p-1}\P(E_t\le u)\ud u, 
\end{align*}
where we used the fact that the function $u\mapsto \P(E_t\le u)$ is a distribution function and hence $\P(E_t< u)=\P(E_t\le u)$ for a.e.\ $u$.
Since $\int_{u_0}^\infty p u^{-p-1}\P(E_t\le u)\,\ud u\le \int_{u_0}^\infty p u^{-p-1}\,\ud u=u_0^{-p}<\infty$ for any fixed $u_0>0$, it suffices to discuss the convergence and divergence of the integral $\int_0^{u_0} p u^{-p-1}\P(E\le u)\,\ud u$ for small enough $u_0>0$. On the other hand, by Corollary 4.14 of \cite{Rosinski_isomorphism} (also see Proposition 1 of \cite{Kobayashi_smallball}), 
\[
	\P(E_t\le u)\sim \nu[t,\infty)u \ \ \textrm{as} \ \ u\downarrow 0. 
\]
(If $\nu[t,\infty)=0$, this is interpreted as $\P(E_t\le u)=o(u)$.) 
Thus, if $0<p<1$, then   
$\int_0^{u_0} p u^{-p-1}\P(E_t\le u)\,\ud u\le  (\nu[t,\infty)+\ve)\int_{0}^{u_0} p u^{-p} \,\ud u<\infty$ for some $\ve>0$ and $u_0>0$, thereby yielding (i). In contrast, if $p\ge 1$ and $\nu[t,\infty)>0$, then for some $\ve\in(0,\nu[t,\infty))$ and $u_0>0$, $\int_0^{u_0} p u^{-p-1}\P(E_t\le u)\ud u\ge (\nu[t,\infty)-\ve)\int_{0}^{u_0} p u^{-p} \ud u=\infty$, from which (ii) follows.

Now, consider a general regularly varying function $f$ stated in the theorem. For any fixed $u_1>0$, 
$\int_{u_1}^\infty(1/f(u))\P(E_t\in\ud u)\le 1/[\inf_{u\ge u_1} f(u)]<\infty$ by assumption. Hence, the integrability of $\E[1/f(E_t)]$ coincides with that of $\int_0^{u_1}(1/f(u))\P(E_t\in \ud u)$ for small enough $u_1>0$.
Since $\tilde{f}(x)\in \mathrm{RV}_{-p}(\infty)$ with $\tilde{f}(x):=f(1/x)$, by Lemma \ref{lemma_RV}(i), $\lim_{u\downarrow 0}f(u)u^{-\alpha}= \infty$ if $\alpha>p$. Thus, if $0<p<1$, then there exist $\alpha\in(p,1)$ and $u_1>0$ such that $f(u)>u^\alpha$ for $u\le u_1$. For this $\alpha$, by the special case above, 
$
	\infty>\E[1/E_t^\alpha]\ge \int_0^{u_1}(1/{u^\alpha})\P(E_t\in\ud u)\ge \int_0^{u_1}(1/{f(u)})\P(E_t\in\ud u), 
$
which yields (i). 

On the other hand, if $p>1$, then since $\lim_{u\downarrow 0}f(u)u^{-\alpha}=0$ if $\alpha<p$, there exist $\alpha\in(1,p)$ and $u_2>0$ such that $f(u)<u^\alpha$ for $u\le u_2$. For this $\alpha$, by the special case above, $\E[1/E_t^\alpha]=\infty$, and hence, 
$\infty=\int_0^{u_2}(1/u^\alpha)\P(E_t\in\ud u)
	\le 	\int_0^{u_2}(1/{f(u)})\P(E_t\in\ud u),$
which yields (ii). 
\end{proof}

\begin{remark}
(1) The proof of Theorem \ref{theorem_negative}(ii) implies that, in the case when $p=1$, if $\nu[t,\infty)>0$ and there exists $c>0$ such that $f(s)\le cs$ for all $s$ small enough, then $\mathbb{E}[1/f(E_t)]=\infty$. 

(2) A simple application of Theorem \ref{theorem_negative} provides an insight into the one-sided exit problem for a time-changed Brownian motion $B\circ E$, where $B$ is a Brownian motion independent of the inverse subordinator $E$. By (8.3) in Chapter 2 of \cite{KaratzasShreve}, the running maximum $M_T:=\max_{0\leq t\leq T}B_t$ of the Brownian motion has density $f_{M_T}(x)=\sqrt{2/(\pi T)}\exp{(-x^2/2T)}$ with $x>0$, so 
$
	\sqrt{2/(\pi T)}e^{-\frac{1}{2T}}
	\le \P(\max_{0\le t\le T}B_t\le 1)
	\le \sqrt{2/(\pi T)}
$ 
for any fixed $T>0$.
Since $\P(\max_{0\le t\le T}B_{E_t}\le 1)=\P(\max_{0\le r\le E_T}B_r\le 1)$, a simple conditioning yields 
\begin{align*}
	\sqrt{\frac{2}{\pi}}\E\left[E_T^{-\frac12}e^{-\frac{1}{2E_T}}\right]
	\le \P(\max_{0\le t\le T}B_{E_t}\le 1)
	\le \sqrt{\frac{2}{\pi}}\E\left[E_T^{-\frac12}\right],
\end{align*}
where the finiteness of the lower and upper bounds is guaranteed by
Theorem \ref{theorem_negative}(i). 
\end{remark}

\section{Stochastic differential equations involving a random time change and associated $L^p$ bounds}\label{section_3}

Throughout the rest of the paper, let $(\F_t)_{t\ge 0}$ be a filtration on the probability space $(\Omega,\F,\P)$ satisfying the usual conditions and $B$ be an $m$-dimensional $(\F_t)$-adapted Brownian motion which is independent of an $(\F_t)$-adapted subordinator $D$ with infinite L\'evy measure.
Since $B$ and $D$ are assumed independent, it is possible to set them up on a product probability space 
in the following manner. 
Let $B=B(\omega_1)$ be a Brownian motion defined on a probability space $(\Omega_B,\mathcal{F}_B,\P_B)$ and $D=D(\omega_2)$ be a subordinator defined on another probability space $(\Omega_D,\mathcal{F}_D,\P_D)$. Consider $B$ and $D$ to be defined on the measurable space $(\Omega_B\times\Omega_D, \mathcal{F}_B\times \mathcal{F}_D)$ by simply setting $B(\omega_1,\omega_2):=B(\omega_1)$ and  $D(\omega_1,\omega_2):=D(\omega_2)$ for $(\omega_1,\omega_2)\in\Omega_B\times \Omega_D$. Then $B$ and $D$, regarded as processes on the product probability space $(\Omega,\mathcal{F},\P):=(\Omega_B\times\Omega_D, \mathcal{F}_B\times \mathcal{F}_D, \P_B\times \P_D)$, are independent.
 Let $\E_B$, $\E_D$ and $\E$ denote the expectations under the probability measures $\P_B$, $\P_D$ and $\P$, respectively. Namely, for a given random element $Z$ on the product space, let 
\[
	\E_B[Z]:=\int_{\Omega_B} Z(\omega_1,\,\cdot\,)\,\ud \P_B(\omega_1), \ \ \E_D[Z]:=\int_{\Omega_D} Z(\,\cdot\,,\omega_2)\,\ud \P_D(\omega_2), 
\]
and 
\[
	\E[Z]:=\int_{\Omega} Z(\omega_1,\omega_2)\,\ud \P(\omega_1,\omega_2). 
\]
Clearly, if $Z\ge 0$ or $Z$ is $\P$-integrable, then $\E_D[\E_B[Z]]=\E[Z]$. 
\textit{We use this setting in the remainder of the paper.}

Let $E$ be the inverse of the subordinator $D$. Consider a stochastic differential equation (SDE) 
\begin{equation}
\label{SDE0}
X_t=x_0+\int_0^t H(E_r, X_r)\ud r+\int_0^t F(E_r, X_r)\ud E_r+\int_0^t G(E_r, X_r)\ud B_{E_r}, \ t\in[0,T],
\end{equation}
where $x_0\in\mathbb{R}^d$ is a non-random constant, $T>0$ is a fixed time horizon, and $H,\, F:[0,\infty)\times \mathbb{R}^d\to \mathbb{R}^d$ and $G:[0,\infty)\times \mathbb{R}^d\to \mathbb{R}^{d\times m}$ are jointly continuous functions 
such that the following assumptions hold: there exists a continuous, nondecreasing function $h(u): [0,\infty)\rightarrow [0,\infty)$ such that for all $u\ge 0$ and $x,y\in\R^d$, 
\begin{itemize}
\item[$\mathcal{H}_1$]: $|H(u,x)-H(u,y)|+|F(u,x)-F(u,y)|+|G(u,x)-G(u,y)|\leq h(u)|x-y|$;
\item[$\mathcal{H}_2$]: $|H(u, x)|+|F(u, x)|+|G(u, x)|\le h(u)(1+|x|)$, 
\end{itemize}
where $|\cdot|$ denotes the Euclidean norms of appropriate dimensions. 
\textit{In the remainder of the paper, we assume that $m=d=1$; extensions of the results established in Sections \ref{section_3} and \ref{section_4} to a multi-dimensional case are straightforward, whereas the presentation in Section \ref{section_5} is restricted to the one-dimensional case.}
Examples of coefficients satisfying the above assumptions include $H(u,x)=F(u,x)=G(u,x)=u^p x$ for some $p>0$. 
We make the bounds in Assumptions $\mathcal{H}_1$ and $\mathcal{H}_2$ time-dependent since
our convergence results for approximation schemes of SDE \eqref{SDE0} rely on the relationship between the growth of the bound $h(u)$ and that of the Laplace exponent $\psi(s)$ of the underlying subordinator $D$.
Moreover, the inclusion of the function $h(u)$ will reveal how the information about random time changes are generally retained when moments of the SDE solution are estimated. 
Note that this paper often assumes that the nondecreasing bound $h(u)$ is regularly varying at $\infty$ with positive index, in which case, the monotonicity assumption of $h(u)$ can be dropped; indeed, we can always replace $h(u)$ by the running maximum $\bar{h}(u):=\sup_{t\in[0,u]}h(t)$, which is also regularly varying at $\infty$ with the same index due to Theorem 1.5.3 of \cite{Bingham_book}. We assume the monotonicity solely for simplicity of discussions in proofs.

For each fixed $t\ge 0$, the random time $E_t$ is an $(\F_t)$-stopping time, and therefore, the time-changed filtration $(\F_{E_t})_{t\ge 0}$ is well-defined. Moreover, since the time change $E$ is an $(\F_{E_t})$-adapted process with continuous, nondecreasing paths and the time-changed Brownian motion $B\circ E$ is a continuous $(\F_{E_t})$-martingale, 
SDE \eqref{SDE0} is understood within the framework of stochastic integrals driven by continuous semimartingales (see \cite{Kobayashi} for details). 
The following lemma confirms that a unique strong solution of SDE \eqref{SDE0} exists. Its proof is analogous to the proof of Lemma 4.1 in \cite{Kobayashi}.

\begin{lemma}
SDE \eqref{SDE0} satisfying Assumption $\mathcal{H}_1$ has a unique strong solution which is a continuous $(\mathcal{F}_{E_t})$-semimartingale. 
\end{lemma}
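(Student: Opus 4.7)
The plan is to adapt the standard Picard iteration argument for SDEs driven by continuous semimartingales to the present setting. First, observe that $t$, $E_t$, and $B_{E_t}$ are all continuous $(\mathcal{F}_{E_t})$-semimartingales: $B\circ E$ is a continuous $(\mathcal{F}_{E_t})$-martingale because $E_t$ is an $(\mathcal{F}_t)$-stopping time for every $t$ and optional sampling applies, while $E$ itself is $(\mathcal{F}_{E_t})$-adapted, continuous, and nondecreasing, hence of finite variation. Because the Lipschitz bound $h(E_r)$ in Assumption $\mathcal{H}_1$ is not a priori bounded in $r$, the next step is to localize: define the $(\mathcal{F}_{E_t})$-stopping times $\tau_n := \inf\{t\ge 0 : E_t \ge n\}$ for $n\in\mathbb{N}$. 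Since $E$ is continuous, nondecreasing, and $E_t\to\infty$ a.s.\ as $t\to\infty$, one has $\tau_n\uparrow\infty$ a.s.\ and $E_s\le n$ for all $s\le\tau_n$, so on $[0,T\wedge\tau_n]$ the monotonicity of $h$ reduces the effective Lipschitz constant to $h(n)$, uniformly in $r$.

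On each interval $[0, T\wedge\tau_n]$, carry out the Picard iteration $X^{(0)}_t := x_0$ and
\[
X^{(k+1)}_t := x_0 + \int_0^t H(E_r, X^{(k)}_r)\,\ud r + \int_0^t F(E_r, X^{(k)}_r)\,\ud E_r + \int_0^t G(E_r, X^{(k)}_r)\,\ud B_{E_r}.
\]
To estimate $\mathbb{E}\bigl[\sup_{s\le t\wedge\tau_n}|X^{(k+1)}_s - X^{(k)}_s|^2\bigr]$, I would apply the Cauchy--Schwarz inequality to the two finite-variation integrals and the Burkholder--Davis--Gundy inequality to the martingale integral (whose quadratic variation at time $s$ equals $\int_0^s G(E_r, X^{(k)}_r)^2\,\ud E_r$), and then use $h(E_r)\le h(n)$ together with $E_s\le n$ on $[0,\tau_n]$ to bound all integrands by a constant multiple of $|X^{(k)}_r-X^{(k-1)}_r|^2$. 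Iterating the resulting Gronwall-type bound forces the successive differences to decay geometrically in $k$, so the iterates form a Cauchy sequence in $L^2(\Omega; C([0,T\wedge\tau_n]))$; a Borel--Cantelli argument then upgrades this to a.s.\ uniform convergence of a subsequence whose limit solves \eqref{SDE0} on $[0,T\wedge\tau_n]$.

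Pasting these local solutions together (uniqueness on each smaller interval giving compatibility) and using $\tau_n\uparrow\infty$ yields a continuous solution on $[0,T]$, which is automatically an $(\mathcal{F}_{E_t})$-semimartingale by construction. Pathwise uniqueness follows from the same Gronwall estimate applied to the difference of two candidate solutions on each $[0,T\wedge\tau_n]$ and then letting $n\to\infty$. The main subtlety, and the reason the argument is more than boilerplate, is the simultaneous presence of three integrators of distinct types (ordinary time, a random nondecreasing finite-variation process, and a time-changed Brownian martingale); but once localization by $\tau_n$ removes the time-dependence of the Lipschitz bound, each integral is controlled by standard tools for continuous semimartingales, so the argument reduces to a direct extension of the proof of Lemma 4.1 in \cite{Kobayashi}.
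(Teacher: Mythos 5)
Your proof takes a genuinely different route from the paper's. The paper simply defines the coefficient operators $(F_i(X_\cdot))_t := H(E_t,X_t)$ etc.\ on the space of $(\mathcal{F}_{E_t})$-adapted c\`adl\`ag processes, observes that they are \emph{process Lipschitz} with random bound $K_t := h(E_t)$ (an adapted continuous process), and invokes Theorem 7 of Chapter V in Protter to conclude existence, uniqueness, and the semimartingale property in one stroke. You instead run a Picard iteration from scratch, localizing by $\tau_n := \inf\{t : E_t \ge n\}$ so that the Lipschitz constant is frozen at $h(n)$ on $[0, T\wedge\tau_n]$. Both routes are valid, and the two capture the same difficulty (the unbounded, random Lipschitz bound) in complementary ways: Protter's process-Lipschitz framework is built precisely to absorb a random adapted Lipschitz process without any localization, whereas your localization reduces matters to the bounded-Lipschitz case at the price of a pasting argument at the end.

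The one place your sketch is a bit too quick is the passage "Iterating the resulting Gronwall-type bound forces the successive differences to decay geometrically in $k$." After your estimates, the bound you obtain has the form
$\E\bigl[\sup_{s\le t\wedge\tau_n}|X^{(k+1)}_s-X^{(k)}_s|^2\bigr] \le C_n\, \E\bigl[\int_0^{t\wedge\tau_n}|X^{(k)}_r-X^{(k-1)}_r|^2\,(\ud r + \ud E_r)\bigr]$,
and because $\ud E_r$ is a random measure, this does not iterate into the classical $(\text{const})^k/k!$ bound without an extra idea. The cleanest fix, and the one consistent with how the paper handles similar random-driver Gronwall steps in Proposition 3.1 and Sections 4--5, is to work conditionally on $D$ (i.e.\ with $\E_B$ for fixed $\omega_D$); then $\ud r + \ud E_r$ is a deterministic measure with total mass at most $T + n$ on $[0,T\wedge\tau_n]$, the factorial decay follows path-by-path in $\omega_D$, and one closes the argument by taking $\E_D$ and checking joint measurability and adaptedness of the limit. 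Alternatively one can invoke a stochastic Gronwall lemma such as the one in Jacod--Shiryaev (Chapter IX.6a, Lemma 6.3), which the paper cites elsewhere. With either of these inserted your argument is complete; as written it has a small but real gap at exactly the point where the random time change makes the problem nontrivial.
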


\begin{proof}
Let $\D$ denote the space of $(\F_{E_t})$-adapted processes with c\`adl\`ag paths on $(\Omega,\F,\P)$.  Define operators $F_1,F_2,F_3:\D\to\D$ by 
$(F_1(X_\cdot))_t:=H(E_t,X_t)$, $(F_2(X_\cdot))_t$ $:=F(E_t,X_t)$, and $(F_3(X_\cdot))_t:=G(E_t,X_t)$
for $X\in\D$ and $t\ge 0$.  
Then by the joint continuity of $H$, $F$ and $G$, for each $i=1,2,3$, it follows that $X^{\sigma-}=Y^{\sigma-}$ implies $(F_i(X_\cdot))^{\sigma-}=(F_i(Y_\cdot))^{\sigma-}$ for any $X,Y\in\D$ and $(\F_{E_t})$-stopping time $\sigma$, where $Z^{\sigma-}_t:=Z_t$ if $t<\sigma$ and $Z^{\sigma-}_t:=Z_{\sigma-}$ if $t\ge \sigma$. 
Also, for all $X,Y\in\D$ and $t\ge 0$,   
$|(F_i(X_\cdot))_t-(F_i(Y_\cdot))_t|\le K_t|X_t-Y_t|$
with $K_t:=h(E_t)$.
Since $(K_t)_{t\ge 0}$ is an $(\F_{E_t})$-adapted, continuous process, each operator $F_i$ is process Lipschitz, and therefore, by Theorem 7 of Chapter V in \cite{Protter}, a unique solution of SDE \eqref{SDE0} exists and is an $(\mathcal{F}_{E_t})$-semimartingale. The continuity of the solution follows by the continuity of the driving process of the SDE. 
\end{proof}

Both $E$ and $B\circ E$ start at 0, and for quadratic variations, $[B\circ E,B\circ E]=E$ and $[E,E]=[B\circ E,E]=[B\circ E,m]=[E,m]=0$,
where $m$ denotes the identity map. 
For example, $\ud[X,X]_t=G^2(E_t,X_t)\,\ud E_t$ for the solution $X$ of SDE \eqref{SDE0}.
For details about stochastic calculus for more general time-changed semimartingales, see Section 4 in \cite{Kobayashi}.

The remainder of this section is devoted to the derivation of sufficient conditions for the existence of the $p$th moment of $\sup_{0\le r\le T}|X_r|$, where $X$ is the solution of SDE \eqref{SDE0}. 
The conditions are necessary to establish the main theorems of this paper in Sections \ref{section_4} and \ref{section_5}. 
Let us recall the Burkholder--Davis--Gundy (BDG) inequality, which states that for any $p>0$, there exists a constant $b_p>0$ such that 
\begin{align}\label{Burkholder}
	\E\left[ \sup_{0\le t\le S} |M_t|^p\right]\le b_p \E\left[ [M,M]_S^{p/2}\right]
\end{align}
for any stopping time $S$ and any continuous local martingale $M$ with quadratic variation $[M,M]$. The constant $b_p$ can be taken independently of $S$ and $M$; see Proposition 3.26 and Theorem 3.28 of Chapter 3 of \cite{KaratzasShreve}.

\begin{proposition}\label{Yp}
Let $X$ be the solution of SDE \eqref{SDE0} satisfying Assumptions $\H_1$ and $\H_2$. Assume further that one of the following two conditions holds:
\begin{enumerate}[(a)]
\item $h$ is constant; 
\item $h$ is continuous, nondecreasing, and regularly varying at $\infty$ with index $q\ge 0$, and the Laplace exponent $\psi$ of $D$ is regularly varying at $\infty$ with index $\beta\in(2q/(2q+1),1)$. 
\end{enumerate}
Then $\E[Y_T^{(p)}]<\infty$ for any $p\ge 1$, where $Y_t^{(p)}:=1+\sup_{0\leq r\leq t}|X_r|^p$.
\end{proposition}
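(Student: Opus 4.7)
My plan is to apply It\^o's formula to $f(X_t)$ with the test function $f(x):=(1+x^2)^{p/2}$, localise with the stopping times $\tau_n:=\inf\{t\ge 0:|X_t|\ge n\}$ (which tend to $\infty$ almost surely by continuity of $X$), and exploit the product decomposition $(\Omega,\mathcal{F},\P)=(\Omega_B\times\Omega_D,\mathcal{F}_B\times\mathcal{F}_D,\P_B\times\P_D)$ introduced at the start of Section \ref{section_3} in order to freeze $\omega_2\in\Omega_D$ and work with a \emph{deterministic} time change $E$ inside $\E_B[\,\cdot\,]$.

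The first step is to record the pointwise estimates
\[
|f'(x)|\bigl(|H(u,x)|+|F(u,x)|\bigr)\le C_p\, h(u)\,f(x),\qquad |f''(x)|\,G(u,x)^2\le C_p\,h(u)^2 f(x),
\]
together with $|f'(x)|^2 G(u,x)^2\le C_p\,h(u)^2 f(x)^2$, all of which follow from $\mathcal{H}_2$ combined with $|f'(x)|\le p\,f(x)^{1-1/p}$, $|f''(x)|\le C_p\,f(x)^{1-2/p}$ and $(1+|x|)^2\le 2f(x)^{2/p}$. Applying It\^o's formula to $f(X_{t\wedge\tau_n})$, then taking $\sup_{t\le S}$ and $\E_B$ (with $\omega_2$ frozen), I would handle the martingale contribution by combining the BDG inequality with the pathwise bound
\[
\int_0^{S\wedge\tau_n} (f')^2 G^2\,\ud E_r\le C_p\Bigl(\sup_{r\le S\wedge\tau_n}f(X_r)\Bigr)\int_0^{S\wedge\tau_n} h(E_r)^2 f(X_r)\,\ud E_r,
\]
Cauchy--Schwarz under $\E_B$, and Young's inequality $\sqrt{ab}\le a/(2\lambda)+\lambda b/2$ with $\lambda$ chosen large enough to absorb a fraction of $\E_B[\sup_{t\le S\wedge\tau_n} f(X_t)]$ back to the left-hand side. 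Writing $U_S(\omega_2):=\E_B[\sup_{t\le S\wedge\tau_n} f(X_t)]$ and using the trivial bound $\E_B[f(X_{r\wedge\tau_n})]\le U_r$, this yields, $\P_D$-almost surely,
\[
U_S\le C\,f(x_0)+C\int_0^S h(E_r)\,U_r\,\ud r+C\int_0^S\bigl[h(E_r)+h(E_r)^2\bigr]U_r\,\ud E_r.
\]

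Gronwall's integral lemma applied to the positive measure $\mu(\ud r):=h(E_r)\,\ud r+[h(E_r)+h(E_r)^2]\,\ud E_r$ and monotonicity of $h$ and $E$ then deliver
\[
U_T(\omega_2)\le C\,f(x_0)\,\exp\!\Bigl(C\bigl[T\,h(E_T)+E_T\,h(E_T)+E_T\,h(E_T)^2\bigr]\Bigr).
\]
Taking $\E_D$ of this pathwise bound and letting $n\to\infty$ by monotone convergence, what remains is the finiteness of the exponential moment of the bracket. In case (a), the exponent is linear in $E_T$ and Proposition \ref{proposition_RV} closes the argument. In case (b), the dominant summand $E_T\,h(E_T)^2$ belongs to $\mathrm{RV}_{2q+1}(\infty)$ as a function of $E_T$ by Lemma \ref{lemma_RV}(iii), so Theorem \ref{expmoment}(i) applied with index $2q+1$ yields finiteness precisely under $2q+1<1/(1-\beta)$, i.e.\ $\beta>2q/(2q+1)$, which is exactly the standing hypothesis. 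Since $|x|^p\le f(x)$, I conclude $\E[Y_T^{(p)}]\le 1+\E[\sup_{t\le T}f(X_t)]<\infty$.

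The main obstacle is the BDG--Young absorption: a more direct Jensen-type bound on $\bigl(\int G^2\,\ud E_r\bigr)^{p/2}$ would introduce factors of the form $E_T^{p-1}h(E_T)^p$, producing an exponent in $\mathrm{RV}_{p(q+1)-1}(\infty)$ and forcing a $p$-dependent lower bound on $\beta$ incompatible with the $p$-uniform threshold $\beta>2q/(2q+1)$ in case (b). The role of the test function $f(x)=(1+x^2)^{p/2}$, together with the crucial estimate $(f')^2 G^2\le Ch^2 f^2$ and the bookkeeping $f^2\le(\sup f)\cdot f$, is precisely to cap the power of $h$ in the Gronwall kernel at $2$ irrespective of $p$, which is what matches the hypothesised threshold.
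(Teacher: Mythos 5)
Your proposal is correct and mirrors the paper's proof in all its essential moves: freezing $\omega_2\in\Omega_D$ to work under $\E_B$ with a deterministic time change, It\^o's formula plus BDG and the Young-type absorption to keep the power of $h$ in the Gronwall kernel at $2$ uniformly in $p$, the Gronwall-type inequality with the mixed driver $\ud r$ and $\ud E_r$, and finally Proposition \ref{proposition_RV} (case (a)) or Theorem \ref{expmoment} (case (b)) to integrate out $\omega_2$. The only cosmetic departure is your choice of the smooth test function $f(x)=(1+x^2)^{p/2}$ (covering all $p\ge 1$ at once), where the paper applies It\^o to $|x|^p$ directly for $p\ge 2$ and handles $1\le p<2$ by Jensen's inequality.
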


\begin{proof}
Since a similar approach will be taken several times in Sections \ref{section_4} and \ref{section_5}, we will provide a detailed proof of this proposition. Recall the notations $\P_B,\,\P_D,\,\P,\,\E_B,\,\E_D$ and $\E$ introduced in the beginning of this section.

 Let $S_\ell:=\inf \{t\ge 0: Y^{(p)}_t>\ell\}$ for $\ell\in\mathbb{N}$. 
Since the solution $X$ has continuous paths, $Y^{(p)}_t<\infty$ for each $t\ge 0$, and hence, $S_\ell\uparrow \infty$ as $\ell\to \infty$. 
For $\P_D$-a.e.\ path, we 
first apply a Gronwall-type inequality to
the function $t\mapsto\E_B[Y^{(p)}_{t\wedge S_\ell}]$ for a fixed $\ell$ and then let $t=T$ and $\ell\to\infty$ in the obtained inequality to establish a bound for $\E_B[Y^{(p)}_T]$.  
Note that due to the definition of $S_\ell$, 
$\int_0^t \E_B[Y^{(p)}_{r\wedge S_\ell}]\,\ud E_r\le \ell E_t<\infty$, 
which allows us to safely apply the Gronwall-type inequality.

Assume $p\ge 2$ since the result for $1\le p<2$ follows immediately from the result for $p\ge 2$ with Jensen's inequality. 
By the It\^o formula, $X^p_s=x_0^p+I_{s}+J_s+K_s$, where
\begin{align*}
	&I_s:=\int_0^s pX_r^{p-1} H(E_r,X_r)\ud r; \ \ \ 
	J_s:=\int_0^s pX_r^{p-1} G(E_r,X_r)\ud B_{E_r};\\
	&K_s:=\int_0^s \left\{pX_r^{p-1} F(E_r,X_r)+\frac{1}{2}p(p-1)X_r^{p-2} G^2(E_r,X_r)\right\}\ud E_r.
\end{align*}
Fix $t\in[0,T]$ and $\ell\in\mathbb{N}$. 
By Assumption $\H_2$ and the inequality $(x+y+z)^p\le  c_p(x^p+y^p+z^p)$ for $x,y,z\ge 0$ with $c_p=3^{p-1}$, 
\begin{align}\label{2020051501}
\E_B\left[\sup_{0\leq s\leq t\wedge S_\ell}|I_s|\right]
&\leq\E_B\left[\int_0^{t\wedge S_\ell}ph(E_r)|X_r|^{p-1}(1+|X_r|)\ud r\right]\notag\\
&\leq pc_ph(E_T)\int_0^{t\wedge S_\ell}\E_B[Y^{(p)}_{r}]\ud r. 
\end{align}
Similarly,
\begin{align*}
\E_B\left[\sup_{0\leq s\leq t\wedge S_\ell}|K_s|\right]
\leq \left(pc_ph(E_T)+\frac{1}{2} p(p-1)c_ph^2(E_T)\right)\int_0^{t\wedge S_\ell}\E_B[Y^{(p)}_{r}]\ud E_r.
\end{align*}
Since $(J_s)_{s\ge 0}$ is a local martingale, applying the BDG inequality \eqref{Burkholder} yields

\noindent$\E_B\bigl[\sup_{0\leq s\leq t\wedge S_\ell}|J_s|\bigr]$
$\le b_1\E_B\bigl[\bigl(\int_0^{t\wedge S_\ell}p^2X_r^{2p-2}G^2(E_r, X_r)\ud E_r\bigr)^{1/2}\bigr] 
$,
and hence, 
\begin{align*}
\E_B\bigl[\sup_{0\leq s\leq t\wedge S_\ell}|J_s|\bigr]
&\leq b_1\E_B\left[pc_ph(E_T)\left(Y_{t\wedge S_\ell}^{(p)}\int_0^{t\wedge S_\ell}Y_r^{(p)}\ud E_r\right)^{1/2}\right]\\
&\leq \frac{1}{2}\E_B[Y_{t\wedge S_\ell}^{(p)}]+2b_1^2p^2c_p^2h^2(E_T)\int_0^{t\wedge S_\ell}\E_B[Y^{(p)}_{r}]\ud E_r, 
\end{align*}
where the last inequality follows from the elementary inequality $(ab)^{1/2}\le a/\lambda +\lambda b$ valid for any $a,b, \lambda>0$, with $\lambda:=2b_1pc_ph(E_T)$. 
(Note that $h(E_T)>0$ due to the discussion given right above Theorem \ref{theorem_negative}.)

Now, note that 
$\int_0^{t\wedge S_\ell} L_r\,\dd E_r\le \int_0^t L_{r\wedge S_\ell}\,\dd E_r$ for any nonnegative process $(L_t)_{t\ge 0}$.
Indeed, the inequality obviously holds if $t\le S_\ell$, while if $t>  S_\ell$, then 
$
	\int_0^t L_{r\wedge S_\ell}\,\dd E_r
	=\int_0^{S_\ell} L_r\,\dd E_r+\int_{S_\ell}^t L_{S_\ell}\,\dd E_r
	\ge \int_0^{t\wedge S_\ell} L_r\,\dd E_r. 
$
Thus, by the above estimates for $I_s$, $J_s$ and $K_s$,
\[
	\E_B[Y^{(p)}_{t\wedge S_\ell}]
	\le 2(1+|x_0|^p)+2pc_ph(E_T)\int_0^t\E_B[Y^{(p)}_{r\wedge S_\ell}]\ud r+2\xi (E_T)\int_0^t \E_B[Y^{(p)}_{r\wedge S_\ell}]\dd E_r,
\]
where 
$
	\xi(u):=pc_ph(u)+\left(p(p-1)c_p/2+2b_1^2p^2c_p^2\right)h^2(u).
$

With the Gronwall-type inequality in Chapter IX.6a, Lemma 6.3 of \cite{JacodShiryaev},
$	
	\E_B[Y^{(p)}_{t\wedge S_\ell}]
	\le 2(1+|x_0|^p) e^{2E_T\xi (E_T)+2pc_p T h(E_T)}.
$
Setting $t=T$, letting $\ell\to\infty$ while recalling $\xi(u)$ does not depend on $\ell$, and using the monotone convergence theorem yields 
\begin{align}\label{008}
	\E_B[Y^{(p)}_T]
	\le 2(1+|x_0|^p) e^{2E_T\xi (E_T)+2pc_pT h(E_T)}.
\end{align}

If $h$ is constant, then the right hand side of \eqref{008} takes the form $ce^{cE_T}$, so taking $\E_D$ on both sides gives  $\E[Y^{(p)}_{T}]\le \E[ce^{cE_T}]<\infty$ due to Proposition \ref{proposition_RV}. 
On the other hand, if $h\in \mathrm{RV}_q(\infty)$ with $q\ge 0$ is continuous and nondecreasing, then since $\xi(u)\in \mathrm{RV}_{2q}(\infty)$, the right hand side of \eqref{008} takes the form $ce^{f(E_T)}$ with $f(u)\in \mathrm{RV}_{2q+1}(\infty)$ 
due to Lemma \ref{lemma_RV}(ii)(iii).
So taking $\E_D$ on both sides gives $\E[Y^{(p)}_{T}]\le \E[ce^{f(E_T)}]$. By Theorem \ref{expmoment}, the latter is finite provided that $2q+1<1/(1-\beta)$, or equivalently,  $\beta\in(2q/(2q+1),1)$. 
\end{proof}

\begin{remark}\label{remark_localization}
(1) The key part in the above proof is to derive an estimate for $\E_B[Y^{(p)}_{t\wedge S_\ell}]$ rather than $\E[Y^{(p)}_{t\wedge S_\ell}]$. This makes estimations such as \eqref{2020051501} possible. Indeed, if we consider $\E[\sup_{0\le s\le t\wedge S_\ell}|I_s|]$ instead of $\E_B[\sup_{0\le s\le t\wedge S_\ell}|I_s|]$ in \eqref{2020051501}, then the expectation and integral signs are no longer interchangeable and $h(E_T)$ cannot be taken out of the integral sign, which makes the Gronwall-type inequality inapplicable. This approach is completely different from the one employed in \cite{JumKobayashi}. It also gives rise to the factor $h(E_T)$ in the exponent of the right hand side of \eqref{008}, which must be handled by Theorem \ref{expmoment}.

(2) The introduction of the localizing sequence $\{S_\ell\}$ in the above proof essentially allows us to consider the process $(Y^{(p)}_t)_{t\in [0,T]}$ to be bounded by a non-random constant.
This, in particular, guarantees that the argument based on the Gronwall-type inequality with the stochastic driver $\ud E_t$ be meaningful. 
On the other hand, in the proofs of Theorems \ref{EU}, \ref{Mil0}, \ref{Milstein} and \ref{HighEU} in Sections \ref{section_4} and \ref{section_5}, we will use a different localizing sequence given by $S_\ell:=\inf \{t\ge 0: \sup_{0\leq s\leq t}\{|X_s-X^\delta_s|\}>\ell\}$, where $X^\delta$ denotes the approximation process, in order to make the Gronwall-type inequality applicable again. However, to clarify the main ideas of the proofs, we suppress $S_\ell$ and simply give arguments assuming that 
the process $(\sup_{0\leq s\leq t}\{|X_s-X^\delta_s|\})_{t\in [0,T]}$ is bounded by a non-random constant. 
\end{remark}

\section{Rate of convergence for a Euler--Maruyama-type scheme when $H\not\equiv 0$}\label{section_4}

This section discusses the rate of strong convergence of a Euler--Maruyama-type approximation scheme for the solution of SDE \eqref{SDE0} with $H(u,x)=H(u)$ under two different sets of assumptions on the SDE coefficients in addition to Assumptions $\H_1$ and $\H_2$.  The different settings result in different rates of convergence.
The results we prove here answer Question \textbf{(A)} raised in Section \ref{section_introduction} and provide generalizations of Theorem 3.1 of \cite{JumKobayashi} to cases when the two drifts $H$ and $F$ simultaneously appear. 
However, as discussed in Section \ref{section_introduction}, the approach we take in this paper is completely different from that in \cite{JumKobayashi} as the duality principle is never used.

Let us first describe an approximation process for an inverse subordinator $E$ given in \cite{Magdziarz_simulation,Magdziarz_spa}.  Fix an equidistant step size $\delta\in (0,1)$ and a time horizon $T>0$.
To approximate $E$ on the interval $[0,T]$, we first simulate a sample path of the subordinator $D$, which has independent and stationary increments, by setting  $D_0=0$ and then following the rule $D_{i\delta}:=D_{(i-1)\delta}+Z_i$, $i=1,2,3,\ldots,$
with an i.i.d.\ sequence $\{Z_i\}_{i\in\mathbb{N}}$ distributed as $Z_i=^\mathrm{d} D_{\delta}$. 
We stop this procedure upon finding the integer $N$ satisfying 
	$T\in[D_{N\delta}, D_{(N+1)\delta}).$
 Note that the $\mathbb{N}\cup\{0 \}$-valued random variable $N$ indeed exists since  
 $D_t\to\infty$ as $t\to\infty$ a.s. 
To generate the random variables $\{Z_i\}$, one can use algorithms presented in Chapter  6 of \cite{ContTankov}. 
Next, let
$
E^\delta_t
	:=\bigl(\min\{n\in \mathbb{N}; D_{n\delta}>t\}-1\bigr)\delta.
$
The sample paths of $E^\delta=(E^\delta_t)_{t\ge 0}$ are nondecreasing step functions with constant jump size $\delta$ and the $i$th waiting time given by $Z_i=D_{i\delta}-D_{(i-1)\delta}$. Indeed, it is easy to see that for $n=0,1,2,\ldots,N$,
\[
	E^\delta_t=n\delta \ \ \textrm{whenever} \ \ t\in[D_{n\delta},D_{(n+1)\delta}).
\]
In particular, 
$E^\delta_T=N\delta.$
The process $E^\delta$ efficiently approximates $E$ as established in \cite{JumKobayashi,Magdziarz_spa}; namely, a.s., 
\begin{equation}\label{revision_1}
	E_t-\delta\le E^\delta_t\le E_t \ \ \textrm{for all} \ \  t\in[0,T]. 
\end{equation}

Now, for $n=0,1,2,\ldots,N$, let 
\[
	\tau_n=D_{n\delta}. 
\] 
By the independence assumption between $B$ and $D$,
we can approximate the Brownian motion $B$ over the time steps $\{0,\delta,2\delta,\ldots,N\delta\}$.  
With this in mind, define a discrete-time process $(X^\delta_{\tau_n})_{n\in \{0,1,2,\ldots,N\}}$ by $X^\delta_0:=x_0$ and for $n=0,1,2,\ldots,N-1$,
\begin{align}\label{revision_2}
X^\delta_{\tau_{n+1}}:&=X^\delta_{\tau_{n}}
+H(E^\delta_{\tau_{n}})(\tau_{n+1}-\tau_n)+F(E^\delta_{\tau_{n}},X_{\tau_n}^\delta)(E^\delta_{\tau_{n+1}}-E^\delta_{\tau_{n}})\notag\\
&\ \ \ +G(E^\delta_{\tau_{n}}, X^\delta_{\tau_n})(B_{E^\delta_{\tau_{n+1}}}-B_{E^\delta_{\tau_{n}}}),
\end{align}
which, due to the relationship $E^\delta_{\tau_{n}}=n\delta$, is equivalent to 
\begin{align}\label{def-EM}
X^\delta_{\tau_{n+1}}\!\!:=\!X^\delta_{\tau_{n}}
\!+\!H(n\delta)(\tau_{n+1}-\tau_n)\!+\!F(n\delta,X_{\tau_n}^\delta)\delta\!+\!G(n\delta, X^\delta_{\tau_n})(B_{(n+1)\delta}-B_{n\delta}).
\end{align}
In particular, if $H\equiv F\equiv 0$ and $G\equiv 1$, then $(X^\delta_{\tau_n})_{n\in \{0,1,2,\ldots,N\}}$ becomes a discretized time-changed Brownian motion whose sample path is generated as in Figure \ref{figure_001}. 
Note that even though expression \eqref{def-EM} might look as though non-random time steps were taken, we indeed take the random time steps $\tau_0, \tau_1, \tau_2, \ldots, \tau_N$ to discretize the driving processes $E=(E_t)_{t\ge 0}$ and $B\circ E=(B_{E_t})_{t\ge 0}$ as indicated in \eqref{revision_2}; thus, the key characteristic of the random trapping events of the time-changed process (which give rise to constant periods) is in fact captured by the random step sizes $\tau_{n+1}-\tau_n=D_{(n+1)\delta}-D_{n\delta}=^\mathrm{d}D_\delta$. 
Note also that we do not discretize the SDE via non-random time steps; that would be practically difficult since the driving processes $E$ and $B\circ E$ have neither independent nor stationary increments.

To define a continuous-time process $(X^\delta_t)_{t\in[0,T]}$, we adopt the continuous interpolation; i.e.\ whenever $s\in[\tau_n,\tau_{n+1})$, 
\begin{align}\label{011}
X^\delta_s&:=X^\delta_{\tau_{n}}+\int_{\tau_n}^{s}H(E_{\tau_n})\,\ud r+\int_{\tau_n}^{s}F(E_{\tau_n},X_{\tau_n}^\delta)\,\ud E_r+\int_{\tau_n}^{s}G(E_{\tau_n}, X^\delta_{\tau_n})\,\ud B_{E_r}.
\end{align}
Let
\[
	n_t=\max\{n\in \mathbb{N}\cup \{0\}; \tau_n\le t\} \ \ \textrm{for} \ \ t\ge 0.
\]
Then clearly $\tau_{n_t}\le t<\tau_{n_t+1}$ for any $t>0$. 
Using \eqref{def-EM} and the identity
$X^\delta_s-x_0=\sum_{i=0}^{n_s-1}(X^\delta_{\tau_{i+1}}-X^\delta_{\tau_i})+(X^\delta_s-X^\delta_{\tau_{n_s}})$, we can express $X^\delta_s-x_0$ as 
\begin{align*}
	\sum_{i=0}^{n_s-1}\!\!\left[H(E_{\tau_i})(\tau_{i+1}\!-\!\tau_i) \!+\!F(E_{\tau_i},X^\delta_{\tau_i})\delta \!+\! G(E_{\tau_i}, X^\delta_{\tau_i})(B_{(i+1)\delta}\!-\!B_{i\delta})\right]\!+\!(X^\delta_s\!-\!X^\delta_{\tau_{n_s}}\!), 
\end{align*}
where we used $i\delta=E_{D_{i\delta}}=E_{\tau_i}$. 
Using \eqref{011} and the fact that $\tau_i=\tau_{n_r}$ for any $r\in[\tau_i,\tau_{i+1})$, we can rewrite the latter in the convenient form
\begin{align}\label{010}
X^\delta_s&=x_0+\int_{0}^{s}H(E_{\tau_{n_r}})\,\ud r+\int_{0}^{s}F(E_{\tau_{n_r}},X_{\tau_{n_r}}^\delta)\,\ud E_r+\int_{0}^{s}G(E_{\tau_{n_r}}, X^\delta_{\tau_{n_r}})\,\ud B_{E_r}.
\end{align}

We are now ready to state the first main theorem of this paper, where we assume that there exist constants $K>0$ and $\theta\in(0,1]$ such that for all $u,v\ge 0$ and $x\in \R$,  
\begin{itemize}
\item[$\H_3$]: $|H(u)-H(v)|+|F(u,x)-F(v,x)|+|G(u,x)-G(v,x)|\leq K|u-v|^\theta(1+|x|)$.
\end{itemize}
Recall that an approximation process $X^\delta$ with step size $\delta>0$ is said to \textit{converge strongly to the solution $X$ uniformly on $[0,T]$ with order $\eta\in(0,\infty)$} if there exist finite positive constants $C$ and $\delta_0$ such that for all $\delta\in(0,\delta_0)$, $\mathbb{E}\left[\sup_{0\le t\le T}|X_t-X^\delta_t|\right]\le C\delta^\eta$.

\begin{theorem}
\label{EU} 
Let $X$ be the solution of SDE \eqref{SDE0} with $H(u,x)=H(u)$ for all $(u,x)\in[0,T]\times \R$ such that Assumptions $\H_1$, $\H_2$ and $\H_3$ hold. 
Assume that the Laplace exponent $\psi$ of $D$ is regularly varying at $\infty$ with index $\beta\in(0,1)$ and that one of the following conditions holds:
\begin{enumerate}[(a)]
\item $h$ is constant and $\beta\in(1/2,1)$;
\item $h$ is continuous, nondecreasing, and regularly varying at $\infty$ with index $q\ge 0$, and $\beta\in \left((2q+1)/(2q+2),1\right).$
\end{enumerate}
Let $X^\delta$ be the approximation process of Euler--Maruyama-type defined in \eqref{def-EM}-\eqref{011}. 
Then there exists a constant $C>0$ not depending on $\delta$ such that for all $\delta\in(0,1)$,
$
	\E\left[\sup_{0\leq s\leq T}|X_s-X^\delta_s|\right]\leq C\delta^{\min\{\theta,1/2\}}.
$
Thus, $X^\delta$ converges strongly to $X$ uniformly on $[0,T]$ with order $\min\{\theta,1/2\}$.
\end{theorem}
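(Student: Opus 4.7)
The plan is to compare $X$ and $X^\delta$ directly through their three-integral representations and to follow the ``$\E_B$ first, then $\E_D$'' strategy used in Proposition \ref{Yp}: first derive a pathwise-in-subordinator bound on $\E_B[\sup_{s\le T}|X_s-X^\delta_s|]$ via a stochastic Gronwall inequality, then take $\E_D$ and apply the exponential moment criterion of Theorem \ref{expmoment}. As in Remark \ref{remark_localization}, I would first localize with $S_\ell := \inf\{t\ge 0 : \sup_{0\le s\le t}|X_s-X^\delta_s|>\ell\}$ so that the $dE_r$-integral in the Gronwall step is pathwise finite, suppress $S_\ell$ in the notation, and restore the full process by monotone convergence at the end.

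Subtracting \eqref{010} from \eqref{SDE0} and inserting $\pm F(E_{\tau_{n_r}},X_r)$, $\pm F(E_{\tau_{n_r}},X^\delta_r)$ (and the analogous terms for $G$), each integrand decomposes into a time-H\"older piece bounded by $K\delta^\theta(1+|X_r|)$ via $\H_3$ together with the deterministic estimate $E_r-E_{\tau_{n_r}}\le\delta$, a space-Lipschitz piece bounded by $h(E_T)|X_r-X^\delta_r|$ via $\H_1$, and a scheme freezing remainder $h(E_T)|X^\delta_r-X^\delta_{\tau_{n_r}}|$. Substituting the one-step formula \eqref{011} gives
\[
|X^\delta_r-X^\delta_{\tau_{n_r}}|\le h(E_T)(r-\tau_{n_r})+h(E_T)(1+|X^\delta_{\tau_{n_r}}|)\bigl[\delta+|B_{E_r}-B_{E_{\tau_{n_r}}}|\bigr].
\]
Because $H(u,x)=H(u)$, the potentially large $(r-\tau_{n_r})$-term enters only inside $dE_r$-integrals, where the pathwise bound $\int_{\tau_n}^{\tau_{n+1}}(r-\tau_n)\,dE_r\le(\tau_{n+1}-\tau_n)\delta$ summed over $n$ yields $\int_0^T(r-\tau_{n_r})\,dE_r=O(\delta)$; the Brownian increment contributes $O(\delta^{1/2})$ via $\E_B|B_{E_r}-B_{E_{\tau_{n_r}}}|^2=E_r-E_{\tau_{n_r}}\le\delta$, which together with the time-H\"older $O(\delta^\theta)$ contribution produces the $\min\{\theta,1/2\}$ rate.

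Applying the BDG inequality \eqref{Burkholder} to the martingale integral, the scalar inequality $(ab)^{1/2}\le a/\lambda+\lambda b$ to peel off the half-power (as in the proof of Proposition \ref{Yp}), and the $L^p$ bound of Proposition \ref{Yp} to control $\E_B[(1+|X_r|)^p]$, I would arrive at an inequality of the form
\[
\E_B\bigl[\sup_{u\le t}|X_u-X^\delta_u|\bigr]\le C\delta^{\min\{\theta,1/2\}}\Phi(E_T)+A(E_T)\!\int_0^t\!\E_B\bigl[\sup_{u\le r}|X_u-X^\delta_u|\bigr]dr+A(E_T)\!\int_0^t\!\E_B\bigl[\sup_{u\le r}|X_u-X^\delta_u|\bigr]dE_r,
\]
with $A(u)=O(h^2(u))$ and $\Phi$ a $D$-measurable factor involving $h(E_T)$, $E_T$, and $\sqrt{\E_B[Y_T^{(p)}]}$. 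The Jacod--Shiryaev Gronwall-type lemma then produces $\E_B[\sup_{u\le T}|X_u-X^\delta_u|]\le C\delta^{\min\{\theta,1/2\}}\Phi(E_T)\exp[A(E_T)(T+E_T)]$.

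The main obstacle will be taking $\E_D$ of this bound and verifying finiteness. Since $A(u)(T+u)\in\mathrm{RV}_{2q+1}$, but a Cauchy--Schwarz step against $\sqrt{\E_B[Y_T^{(p)}]}\le\exp[O(h^2(E_T)E_T)]$ from Proposition \ref{Yp} combined with additional $h(E_T)$-factors inside $\Phi$ effectively raises the regular-variation index of the exponent to $2q+2$, Theorem \ref{expmoment} requires $2q+2<1/(1-\beta)$, i.e.\ $\beta>(2q+1)/(2q+2)$, which is precisely condition (b). Case (a) is the $q=0$ specialization with $h$ constant, where the same threshold $\beta>1/2$ re-emerges while the finiteness of $\E_D[\Phi(E_T)]$ is supplied by Proposition \ref{proposition_RV}.
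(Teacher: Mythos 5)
Your high-level strategy — localize, apply a $\E_B$-Gronwall argument pathwise in $D$, then take $\E_D$ and invoke Theorem \ref{expmoment} — is exactly the paper's, and you correctly identify both the $\min\{\theta,1/2\}$ rate (with the $\delta^{1/2}$ coming from the Brownian increment) and the regular-variation threshold $2q+2 < 1/(1-\beta)$. However, there is a gap in how you handle the ``scheme freezing'' remainder that the paper's choice of decomposition is specifically designed to avoid.

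You insert $\pm F(E_{\tau_{n_r}},X_r)$ and $\pm F(E_{\tau_{n_r}},X^\delta_r)$, so your third piece is $h(E_T)|X^\delta_r - X^\delta_{\tau_{n_r}}|$, which you then bound by substituting the one-step formula \eqref{011}; that bound is proportional to $h(E_T)(1+|X^\delta_{\tau_{n_r}}|)[\delta + |B_{E_r}-B_{E_{\tau_{n_r}}}|]$. The problem is that this requires control of moments of the \emph{approximation} process $X^\delta$, but Proposition \ref{Yp} furnishes $L^p$ bounds only for the true solution $X$; you cite Proposition \ref{Yp} to control $\E_B[(1+|X_r|)^p]$, which does not help here. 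The obvious patch $|X^\delta_{\tau_{n_r}}| \le |X_{\tau_{n_r}}| + Z_{\tau_{n_r}}$ reintroduces the error $Z$ multiplied against a Brownian increment inside the quadratic-variation integral that BDG produces, and this coupled product does not cleanly fold into a Gronwall integral of $\E_B[Z_r]$ without further work.

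The paper instead inserts $\pm F(E_{\tau_{n_r}},X_r)$ and $\pm F(E_{\tau_{n_r}},X_{\tau_{n_r}})$, so the scheme-freezing term becomes $h(E_{\tau_{n_r}})|X_r - X_{\tau_{n_r}}|$, a true-solution increment. This is controlled by a dedicated Lemma \ref{XX}: $\E_B[|X_t-X_s|] \le \sqrt{2}\,h(E_t)\E_B[Y_t^{(2)}]^{1/2}\{(t-s)+(E_t-E_s)^{1/2}+(E_t-E_s)\}$, proved via Cauchy--Schwarz from the SDE representation and Proposition \ref{Yp}. Combined with the deterministic bound $\int_0^t (r-\tau_{n_r})^2\,\ud E_r \le 2T^2\delta$ (the squared analogue of the first-power estimate you mention, needed because the paper runs its Gronwall argument at the level of $\E_B[Z_t^2]$ rather than $\E_B[Z_t]$), this closes the loop entirely through objects whose moments are already known. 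To fix your proposal, you should either prove a uniform-in-$\delta$ analogue of Proposition \ref{Yp} for $X^\delta$ using representation \eqref{010} (which is feasible but an extra step), or switch to the paper's intermediate point $X_{\tau_{n_r}}$ and prove the Lemma \ref{XX}-type bound, which is the cleaner route.
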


The proof of this theorem relies on the following lemma, which can be viewed as a generalization of Lemma 3.2 in \cite{JumKobayashi} to cases when $H\not\equiv 0$. Recall the notations $\P_B,\,\P_D,\,\P,\,\E_B,\,\E_D$ and $\E$ defined in the beginning of Section \ref{section_3}.

\begin{lemma}
\label{XX}
Under the assumptions of Theorem \ref{EU}, for any $t\ge s\ge0$,
\[
\E_B[|X_t-X_s|]\leq \sqrt 2h(E_t)\E_B[Y_t^{(2)}]^{1/2}\big\{(t-s)+(E_t-E_s)^{1/2}+(E_t-E_s)\big\},
\]
where $Y_t^{(2)}$ is defined in Proposition \ref{Yp}. 
\end{lemma}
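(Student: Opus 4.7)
The plan is to decompose the difference $X_t - X_s$ into its three natural pieces coming from SDE \eqref{SDE0} and estimate each one separately under $\E_B$. Explicitly, for $t \ge s \ge 0$,
\[
X_t - X_s = \int_s^t H(E_r)\,\ud r + \int_s^t F(E_r, X_r)\,\ud E_r + \int_s^t G(E_r, X_r)\,\ud B_{E_r},
\]
so by the triangle inequality $\E_B[|X_t - X_s|]$ is bounded by $\E_B$ of the absolute values of these three integrals. The key conceptual point, which parallels the proof of Proposition \ref{Yp} and Remark \ref{remark_localization}(1), is that under $\E_B$ the process $E$ is deterministic (it depends only on $\omega_2$); in particular $h(E_t)$ is a constant under $\E_B$ and may be pulled out of integrals.

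For the drift-in-$r$ integral, I use Assumption $\H_2$ together with the monotonicity of $h$ and $E_r \le E_t$ for $r \le t$ to get $|H(E_r)| \le h(E_r)(1+|X_r|) \le h(E_t)(1+|X_r|)$, and bound $1+|X_r| \le \sqrt{2}\,\sqrt{1+|X_r|^2} \le \sqrt{2}\sqrt{Y_t^{(2)}}$. Integrating over $[s,t]$ yields $\int_s^t |H(E_r)|\,\ud r \le \sqrt{2}\,h(E_t)\sqrt{Y_t^{(2)}}(t-s)$. Taking $\E_B$ and applying Jensen's inequality $\E_B[\sqrt{Y_t^{(2)}}] \le \sqrt{\E_B[Y_t^{(2)}]}$ gives the $(t-s)$ piece. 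The $\ud E_r$ drift is handled identically, replacing $(t-s)$ by $(E_t - E_s)$ and yielding the $(E_t - E_s)$ piece.

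For the martingale term, I use the fact that $\bigl(\int_0^{\cdot} G(E_r, X_r)\,\ud B_{E_r}\bigr)$ is a continuous $(\F_{E_t})$-local martingale with quadratic variation $\int_0^{\cdot} G^2(E_r, X_r)\,\ud E_r$. By the $L^2$ bound (Jensen followed by the It\^o isometry, or equivalently the $p=2$ version of the BDG inequality \eqref{Burkholder}) combined with $G^2(E_r, X_r) \le h^2(E_t)(1+|X_r|)^2 \le 2\,h^2(E_t)\,Y_t^{(2)}$,
\[
\E_B\!\left[\left|\int_s^t G(E_r, X_r)\,\ud B_{E_r}\right|\right]
\le \E_B\!\left[\int_s^t G^2(E_r, X_r)\,\ud E_r\right]^{1/2}
\le \sqrt{2}\,h(E_t)\,\E_B[Y_t^{(2)}]^{1/2}(E_t - E_s)^{1/2},
\]
which supplies the $(E_t - E_s)^{1/2}$ piece. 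Summing the three contributions gives the claimed inequality, after noting $Y_t^{(2)} \ge 1$ ensures the constant $\sqrt{2}\,\sqrt{\E_B[Y_t^{(2)}]}$ dominates all three prefactors uniformly.

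The only subtle step is the third one: since the BDG/It\^o isometry is being applied under $\E_B$ with $D$ (hence $E$) frozen, we should either invoke a localization argument (as hinted in Remark \ref{remark_localization}(2), via $S_\ell := \inf\{t : |X_t|>\ell\}$) to ensure the local martingale is genuinely square-integrable before passing to the limit by monotone convergence, or observe directly that Proposition \ref{Yp} already guarantees $\E[Y_T^{(2)}] < \infty$ so $\E_B[Y_t^{(2)}]<\infty$ for $\P_D$-a.e.\ path. The rest of the proof is routine bookkeeping of constants.
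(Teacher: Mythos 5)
Your proof is correct and follows essentially the same route as the paper's: decompose $X_t-X_s$ into the three integrals, bound each integrand via $\mathcal{H}_2$ with $h(E_r)\le h(E_t)$ pulled out under $\E_B$, and use Cauchy--Schwarz/It\^o isometry for the martingale term, with Jensen providing $\E_B[\sqrt{Y_t^{(2)}}]\le\E_B[Y_t^{(2)}]^{1/2}$. The only cosmetic difference is that the paper first gets the sharper bounds $(t-s)h(E_t)$ and $(E_t-E_s)h(E_t)\E_B[Y_t^{(1)}]$ for the first two terms and then applies Jensen (using $Y_t^{(1)}\le\sqrt{2}\,(Y_t^{(2)})^{1/2}$) at the very end to unify the constant, whereas you fold the $\sqrt{2}\,(Y_t^{(2)})^{1/2}$ factor into each piece from the start; the result is identical.
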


\begin{proof}
By the Cauchy-Schwartz inequality, $\E_B[|X_t-X_s|]$ is dominated above by 
\begin{align*}
&\E_B\!\left[\int_s^t\! |H(E_r)|\ud r\right]\!+\!\E_B\left[\int_s^t \! |F(E_r, X_r)|\ud E_r\right]\!+\!\E_B\left[\left|\int_s^t\!G(E_r, X_r)\ud B_{E_r}\right|^2\right]^{1/2}\\
&\leq (t-s)h(E_t)+(E_t-E_s)h(E_t)\E_B[Y_t^{(1)}]+\sqrt 2(E_t-E_s)^{1/2} h(E_t)\E_B[Y_t^{(2)}]^{1/2}.
\end{align*}
Jensen's inequality gives the desired bound.
\end{proof}

Now we are ready to prove Theorem \ref{EU}, where the localizing sequence $S_\ell:=\inf \{t\ge 0: \sup_{0\leq s\leq t}\{|X_s-X^\delta_s|\}>\ell\}$ allows us to consider the process $(\sup_{0\leq s\leq t}\{|X_s-X^\delta_s|\})_{t\in [0,T]}$ to be bounded. However, as stated in Remark \ref{remark_localization}(2), in order to clarify the main ideas of the proof, we suppress $S_\ell$ and simply give arguments assuming the boundedness. The same argument applies to the proofs of all the statements in Sections \ref{section_4} and \ref{section_5}. \vspace{3mm}

\noindent\textit{Proof of Theorem \ref{EU}}\ \ 
Let $Z_t:=\sup_{0\leq s\leq t}|X_s-X^\delta_s|$ for $t\in[0,T]$. Then by the representations  of $X$ and $X^\delta$ in \eqref{SDE0} and \eqref{010}, respectively, $Z_t\leq I_1+I_2+I_3$, where
\begin{align*}
I_1&:=
\sup_{0\leq s\leq t}\left|\int_0^s (H(E_r)-H(E_{\tau_{n_r}}))\ud r\right|;\\
I_2&:=\sup_{0\leq s\leq t}\left|\int_0^s (F(E_r, X_r)-F(E_{\tau_{n_r}}, X_{\tau_{n_r}}^\delta))\ud E_r\right|;\\
I_3&:=\sup_{0\leq s\leq t}\left|\int_0^s (G(E_r, X_r)-G(E_{\tau_{n_{r}}}, X_{\tau_{n_{r}}}^\delta))\ud B_{E_r}\right|.
\end{align*}

In terms of $I_1$, recall that $\tau_{n_r}\le r<\tau_{n_r+1}$ and $0\le E_r-E_{\tau_{n_r}}\le (n_r+1)\delta-n_r \delta=\delta$. This, together with the Cauchy-Schwartz inequality and Assumption $\H_3$, yields
\begin{align}\label{012}
I_1^2&\leq t\int_0^t(H(E_r)-H(E_{\tau_{n_r}}))^2\ud r\leq K^2T^2\delta^{2\theta}.
\end{align}

As for $I_2$, note that
$\E_B[I_2^2]\leq E_t\int_0^t\E_B\bigl[(F(E_r, X_r)-F(E_{\tau_{n_r}}, X_{\tau_{n_r}}^\delta))^2\bigr]\ud E_r$ by the Cauchy-Schwartz inequality. Assumptions $\H_1$ and  $\H_3$ together with the inequality $|F(E_r, X_r)-F(E_{\tau_{n_r}}, X_{\tau_{n_r}}^\delta)|\le |F(E_r, X_r)-F(E_{\tau_{n_r}}, X_r)|+|F(E_{\tau_{n_r}}, X_r)-F(E_{\tau_{n_r}}, X_{\tau_{n_r}})|+|F(E_{\tau_{n_r}}, X_{\tau_{n_r}})-F(E_{\tau_{n_r}}, X_{\tau_{n_r}}^\delta)|$ yield
\begin{align}
\label{I2}
&\E_B[I_2^2]\notag\\
&\leq 3E_T\int_0^t \E_B\bigl[K^2\delta^{2\theta}(1+|X_r|)^2+h^2(E_{\tau_{n_r}})|X_r-X_{\tau_{n_r}}|^2+h^2(E_{\tau_{n_r}}) Z_{\tau_{n_r}}^2
\bigr]\ud E_r\notag\\
&\leq 6E_T^2K^2\delta^{2\theta}\E_B[Y_T^{(2)}]\notag\\
&\ \ +3E_Th^2(E_T)
\biggl\{\int_0^t\E_B[|X_r-X_{\tau_{n_r}}|^2]\ud E_r+
\int_0^t\E_B[Z_r^2]\ud E_r\biggr\}.
\end{align}
Now, for any $r\in[0,t]$, by Lemma \ref{XX} and the fact that $0\le E_r-E_{\tau_{n_r}}\leq \delta$, 
\begin{align}
\label{61}
\E_B[|X_r-X_{\tau_{n_r}}|^2]\leq 6h^2(E_T)\E_B[Y_T^{(2)}]\big\{(r-\tau_{n_r})^2+\delta+\delta^2\big\}.
\end{align}
Moreover,
\begin{align}
\label{62}
&\int_0^t(r-\tau_{n_r})^2\ud E_r=\sum_{i=0}^{n_t-1}\int_{\tau_i}^{\tau_{i+1}}(r-\tau_i)^2\ud E_r+\int_{\tau_{n_t}}^t(r-\tau_{n_t})^2\ud E_r\notag\\
&\leq \delta\left(\sum_{i=0}^{n_t-1}(\tau_{i+1}-\tau_i)^2+(t-\tau_{n_t})^2\right)
\le 2T\delta \left(\sum_{i=0}^{n_t-1}(\tau_{i+1}-\tau_i)+(t-\tau_{n_t})\right)\notag\\
&\leq 2T^2\delta.
\end{align}
Combining \eqref{61} and \eqref{62} with \eqref{I2} and recalling that $\delta<1$ gives 
\begin{align}\label{013}
\E_B[I_2^2]\leq \xi_1(E_T)\E_B[Y_T^{(2)}]\delta^{\min\{2\theta,1\}}+3E_Th^2(E_T)\int_0^t\E_B[Z_r^2]\ud E_r,
\end{align}
where $\xi_1(u):=36u^2h^4(u)+36uh^4(u)T^2+6u^2K^2$. 

In terms of $I_3$, the BDG inequality \eqref{Burkholder} and a calculation similar to the previous paragraph yield
\begin{align}\label{014}
\E_B[I_3^2]
&\leq \xi_2(E_T)\E_B[Y_T^{(2)}]\delta^{\min\{2\theta,1\}}+3b_2h^2(E_T)\int_0^t\E_B[Z_r^2]\ud E_r,
\end{align}
where $\xi_2(u):=b_2\xi_1(u)/u$.

Putting together estimates \eqref{012}, \eqref{013} and \eqref{014} gives
$$
\E_B[Z_t^2]\leq \xi_3(E_T)\E_B[Y_T^{(2)}]\delta^{\min\{2\theta,1\}}+ 9(E_T+b_2)h^2(E_T)\int_0^t\E_B[Z_r^2]\ud E_r, 
$$
where $\xi_3(u):=3\xi_1(u)+3\xi_2(u)+3K^2T^2$.
Using the Gronwall-type inequality in Chapter IX.6a, Lemma 6.3 of \cite{JacodShiryaev} and setting $t=T$ gives
$$
\E_B[Z_T^2]\leq \xi_3(E_T)\E_B[Y_T^{(2)}] e^{9E_T(E_T+b_2)h^2(E_T)}\delta^{\min\{2\theta,1\}}.
$$
Taking $\E_D$ on both sides and using the Cauchy-Schwartz inequality, 
\begin{align}\label{022}
\E[Z_T^2]\leq \E[\xi_3^4(E_T)]^{1/4}\E[(Y_T^{(2)})^4]^{1/4} \E\left[e^{18E_T(E_T+b_2)h^2(E_T)}\right]^{1/2}\delta^{\min\{2\theta,1\}}.
\end{align}
The desired result follows upon showing the expectations on the right hand side of \eqref{022} are finite.  
Now, suppose $h$ is constant. Then it follows from Propositions \ref{proposition_RV} and \ref{Yp} that $\E[\xi_3^4(E_T)]<\infty$ and $\E[(Y_T^{(2)})^4]\leq 8\E[Y_T^{(8)}]<\infty$. In terms of $\E\bigl[e^{18E_T(E_T+b_2)h^2(E_T)}\bigr]$, the exponent takes the form $f(E_T)$ with $f\in \mathrm{RV}_{2}(\infty)$ due to Lemma \ref{lemma_RV}, so the expectation is finite if $2<1/(1-\beta)$ (or equivalently, $\beta\in(1/2,1)$) due to Theorem \ref{expmoment}.

On the other hand, if $h\in \mathrm{RV}_q(\infty)$ with $q\ge 0$, then by Propositions  \ref{Yp}, $\E[Y_T^{(8)}]<\infty$ if $\beta\in(2q/(2q+1),1)$. Since the exponent of $\E\bigl[e^{18E_T(E_T+b_2)h^2(E_T)}\bigr]$ takes the form $f(E_T)$ with $f\in \mathrm{RV}_{2q+2}(\infty)$, the expectation is finite if $2q+2<1/(1-\beta)$ (or equivalently, $\beta\in((2q+1)/(2q+2),1)$) due to Theorem \ref{expmoment}. Consequently, the result follows as long as $\beta\in((2q+1)/(2q+2),1)$. \qed 

\begin{remark}
(1) The method used in the above proof provides a general idea of how to analyze the rates of strong convergence of approximation schemes for possibly larger classes of SDEs involving random time changes.

(2) The above proof would not work if we allowed the coefficient $H$ to depend on $x$ as well. Indeed, that case would require an estimation of $\E_B[I_1^2]$ in a way similar to \eqref{I2}, giving rise to the integral $\int_0^t \E_B[|X_r-X_{\tau_{n_r}}|^2]\,\ud r$. This integral, due to \eqref{61}, could be dominated above by a quantity involving the integral  $\int_0^t(r-\tau_{n_r})^2\,\ud r$. However, unlike \eqref{62}, the latter integral 
would not yield a bound containing $\delta$. 

(3) Although we interpolated the discretized process $(X^\delta_{\tau_n})_{n\in\{0,1,2,\ldots,N\}}$ via the continuous interpolation in \eqref{011}, it is also possible to adopt the piecewise constant interpolation $X^\delta_t:=X^\delta_{\tau_{n_t}}$ as in \cite{JinKobayashi} when $H\equiv 0$.
In the latter case, the bound for $Z_t$ will additionally contain the suprema of integrals over $[\tau_{n_s},s]$, including $I_5:=\sup_{0\le s\le t}|\int_{\tau_{n_s}}^s G(E_r,X_r)\,\ud B_{E_r}|$. 
Estimation of $\E_B[I_5^2]$ can be carried out with the help of
a result on modulus of continuity of stochastic integrals established in \cite{FischerNappo}, which only yields the convergence order $1/2-\ve$ for any $\ve>0$. 
See Remark 9(3) of \cite{JinKobayashi} for details.

(4) The above proof shows that the parameter $\beta$ plays an important role in determining the finiteness of the upper bound for $\E[Z_T^2]$. On the other hand, $\beta$ does not affect the rate of convergence for $X^\delta$, which is due to the fact that we constructed the discretized time change $E^\delta$ in such a way that $E^\delta$ converges to $E$ with order 1 \textit{regardless of the value of $\beta$,} as indicated by \eqref{revision_1}. The above argument shows that the rate of convergence for $X^\delta$ could possibly involve $\beta$ if the rate of convergence for $E^\delta$ depended on $\beta$.
\end{remark}

We now consider SDE \eqref{SDE0} when not only $H(u,x)$ but also $G(u,x)$ is independent of $x$.
The order of convergence cannot be increased if we use the Euler--Maruyama-type approximation since the term $\E_B[I_2^2]$ in the proof of Theorem \ref{EU} remains the same and gives a bound containing $\delta^{\min\{2\theta,1\}}$.
In order to estimate $\E_B[I_2^2]$ in a way that a bound involves the better rate $\delta^{\min\{2\theta,2\}}$ ($=\delta^{2\theta}$),
we employ a Milstein-type approach, which assumes some differentiability on $F$ and uses the It\^o formula to expand it. 
In addition to Assumptions $\H_1$ and $\H_2$, we assume that there exist constants $K>0$ and $\theta\in(0,1]$ and a continuous, nondecreasing function $k(u):[0,\infty)\rightarrow(0,\infty)$ such that for all $u,v\ge 0$ and $x\in \R$, 
\begin{itemize}
\item[$\H_4$:] $\bullet$ $F\in C^{1,2}$;
\item[\phantom{$\H_4$:}] $\bullet$ $|H(u)-H(v)|+|G(u)-G(v)|\leq K|u-v|^\theta$;
\item[\phantom{$\H_4$:}] $\bullet$ $|F_u(u,x)|+|F_xH(u,x)|+|F_xF(u,x)|+|F_xG(u,x)|+|F_{xx}G^2(u,x)|$
\item[\phantom{$\H_4$:}] $\ \ \ \leq k(u)(1+|x|)$.
\end{itemize}
Here, we introduce a new function $k(u)$ in addition to the already given function $h(u)$ since $k(u)$ and $h(u)$ will differently affect the range of $\beta$ values for which our argument works. Note that even though our approach is of Milstein-type, since $G_x(u,x)\equiv 0$ when $G(u,x)$ does not depend on $x$, the approximation scheme itself is no different from the Euler--Maruyama-type scheme. 
In this restrictive setting with $\theta>1/2$, the order of strong convergence improves as the following theorem shows.

\begin{theorem}
\label{Mil0} 
Let $X$ be the solution of SDE \eqref{SDE0} with $H(u,x)=H(u)$ and $G(u,x)=G(u)$ for all $(u,x)\in[0,T]\times \R$ such that Assumptions $\H_1$, $\H_2$ and $\H_4$ hold. Assume further that one of the following conditions holds: 
\begin{enumerate}[(a)]
\item $h$ and $k$ are constant; 
\item $h$ and $k$ are continuous and nondecreasing, $h$ and $\log k$ are regularly varying at $\infty$ with indices $q\ge 0$ and $\tilde{q}\ge 0$, respectively, and the Laplace exponent $\psi$ of $D$ is regularly varying at $\infty$ with index 
$\beta\in \left((q_\ast-1)/q_\ast,1\right)$, where $q_\ast:=\max\{2q+1,\tilde{q}\}$. 
\end{enumerate}
Let $X^\delta$ be the approximation process of Euler--Maruyama-type defined in \eqref{def-EM}-\eqref{011}. 
Then there exists a constant $C>0$ not depending on $\delta$ such that for all $\delta\in(0,1)$,
$
	\E\left[\sup_{0\leq s\leq T}|X_s-X^\delta_s|\right]\leq C\delta^\theta.
$
Thus, $X^\delta$ converges strongly to $X$ uniformly on $[0,T]$ with order $\theta$.
 \end{theorem}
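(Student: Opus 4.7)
The plan is to refine the proof of Theorem \ref{EU} by inserting an It\^o-type expansion of $F$ so that the drift-error contribution $I_2$ is bounded of order $\delta^{2\theta}$ rather than the $\delta^{\min\{2\theta,1\}}$ obtained there. Set $Z_t := \sup_{0 \le s \le t}|X_s - X^\delta_s|$ and decompose $Z_t \le I_1+I_2+I_3$ exactly as in Theorem \ref{EU}; as there, we localize by a stopping sequence $S_\ell$ and suppress it per Remark \ref{remark_localization}(2). Because $H(u,x)=H(u)$ and $G(u,x)=G(u)$ no longer depend on $x$, Assumption $\H_4$ gives $|H(E_r)-H(E_{\tau_{n_r}})|\le K\delta^\theta$ and $|G(E_r)-G(E_{\tau_{n_r}})|\le K\delta^\theta$ directly, so $\E_B[I_1^2]\le K^2T^2\delta^{2\theta}$ and, by the BDG inequality, $\E_B[I_3^2]\le 4K^2\delta^{2\theta}E_T$; neither contributes any Gronwall term. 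All the work therefore falls on $I_2$.

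Write $F(E_r,X_r)-F(E_{\tau_{n_r}},X^\delta_{\tau_{n_r}}) = A(r)+B(r)$, with $A(r):=F(E_r,X_r)-F(E_{\tau_{n_r}},X_{\tau_{n_r}})$ and $B(r):=F(E_{\tau_{n_r}},X_{\tau_{n_r}})-F(E_{\tau_{n_r}},X^\delta_{\tau_{n_r}})$. The Lipschitz term $B$ generates the usual Gronwall contribution $h^2(E_T)\int_0^t\E_B[Z_r^2]\,\dd E_r$. For $A$, apply It\^o's formula to $F(E_u,X_u)$ on $[\tau_{n_r},r]$ using $F\in C^{1,2}$, $\dd X_u = H(E_u)\,\dd u+F(E_u,X_u)\,\dd E_u+G(E_u)\,\dd B_{E_u}$, and $\dd[X,X]_u = G^2(E_u)\,\dd E_u$: this yields five integrals $A_1,\dots,A_5$ with integrands $F_u$, $F_xH$, $F_xF$, $F_xG$, $\tfrac12 F_{xx}G^2$, each bounded by $k(E_u)(1+|X_u|)$ under $\H_4$.

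The key technique is to apply the (stochastic) Fubini theorem to $\int_0^s A_i(r)\,\dd E_r$. Since $\{r\ge u:\tau_{n_r}\le u\}=[u,\tau_{n_u+1})$, each iterated integral collapses to a single integral over $u$ multiplied by the factor $E_{\min(s,\tau_{n_u+1})}-E_u$, uniformly bounded by $\delta$ because $E_{\tau_{k+1}}-E_{\tau_k}=\delta$. For $i\in\{1,2,3,5\}$ this gives $\sup_s|\int_0^s A_i\,\dd E_r|\le C\,k(E_T)(1+Y_T^{(1)})\delta$ pointwise. The main obstacle is the martingale term $A_4$, for which the Fubini identity
\[
\int_0^s A_4(r)\,\dd E_r = \int_0^s F_x(E_u,X_u)G(E_u)\bigl(E_{\min(s,\tau_{n_u+1})}-E_u\bigr)\,\dd B_{E_u}
\]
has integrand depending on $s$, precluding a direct application of Doob's inequality. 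Split this into a ``complete intervals'' part $\tilde P_s$ (contributions from $u$ with $\tau_{n_u+1}\le s$) and an ``incomplete last interval'' part $\tilde Q_s:=\int_{\tau_{n_s}}^s F_xG(E_s-E_u)\,\dd B_{E_u}$. Conditionally on $D$, $\tilde P_s$ is a $\P_B$-martingale with jumps $\xi_k:=\int_{\tau_k}^{\tau_{k+1}}F_xG((k+1)\delta-E_u)\,\dd B_{E_u}$ satisfying $\E_B[\xi_k^2]\le C\delta^3 k^2(E_T)(1+Y_T^{(2)})$; Doob's inequality under $\P_B$ and summation over the $\lceil E_T/\delta\rceil$ intervals give $\E_B[\sup_s\tilde P_s^2]\le C\delta^2 E_T k^2(E_T)(1+Y_T^{(2)})$. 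For $\tilde Q_s$ on each sub-interval $[\tau_k,\tau_{k+1})$, write $E_s-E_u=(E_s-E_{\tau_k})-(E_u-E_{\tau_k})$ to express $\tilde Q_s$ as a product of a $\delta$-bounded factor with a genuine martingale plus another martingale, each controlled by Doob within the sub-interval; summation gives $\E_B[\sup_s\tilde Q_s^2]\le C\delta^2 E_T k^2(E_T)(1+Y_T^{(2)})$.

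Combining the estimates, together with the Gronwall contribution from $B$, produces $\E_B[Z_t^2]\le \Psi(E_T)(1+Y_T^{(2)})\delta^{2\theta}+\Lambda(E_T)h^2(E_T)\int_0^t \E_B[Z_r^2]\,\dd E_r$, where $\Psi$ and $\Lambda$ are polynomials in $E_T$, $h(E_T)$, $k(E_T)$. The Gronwall-type inequality (Chapter IX.6a, Lemma 6.3 of \cite{JacodShiryaev}) with driver $\dd E_r$, followed by taking $\E_D$ and applying the Cauchy--Schwartz inequality, reduces the theorem to finiteness of $\E[\Psi(E_T)^p(1+Y_T^{(2)})^p]$ (handled by Proposition \ref{Yp} together with $\E[k(E_T)^p]<\infty$, which holds whenever $\tilde q<1/(1-\beta)$ by Theorem \ref{expmoment} applied to $f=p\log k$), and finiteness of $\E[\exp(c\,\Lambda(E_T)h^2(E_T)E_T)]$, whose exponent is regularly varying at $\infty$ with index $2q+1$ and is integrable precisely when $2q+1<1/(1-\beta)$. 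The combined condition $q_\ast<1/(1-\beta)$ with $q_\ast=\max\{2q+1,\tilde q\}$ is exactly $\beta>(q_\ast-1)/q_\ast$, yielding the claim in case (b); case (a) then follows since all random coefficients collapse to absolute constants and only $\E[e^{cE_T}]<\infty$ (Proposition \ref{proposition_RV}) is needed.
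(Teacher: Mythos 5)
Your overall decomposition and the It\^o expansion of $F$ (rather than of $X_{\tau_{n+1}}$, as the paper does) is a legitimate and essentially parallel route, and the stochastic-Fubini treatment of the remainder, with its split into a complete-intervals martingale $\tilde P_s$ and a tail term $\tilde Q_s$, mirrors the paper's $M_{n_s}+U_s$ argument in a different guise. However, there is a genuine gap in how you close the Gronwall step: you estimate $\E_B[Z_t^2]$, i.e.\ the \emph{second} moment, whereas the paper deliberately works with the \emph{first} moment $\E_B[Z_t]$. This matters for the range of $\beta$.

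Concretely, your Gronwall coefficient comes from the Lipschitz piece $B(r)$. Estimating $\E_B\bigl[\sup_{s\le t}\bigl|\int_0^s B(r)\,\ud E_r\bigr|^2\bigr]$ via Cauchy--Schwartz inevitably introduces an extra factor of $E_T$, giving a coefficient of order $E_T\,h^2(E_T)$, not $h^2(E_T)$ alone. The resulting Gronwall exponential then has exponent of order $E_T^2 h^2(E_T)$, which is regularly varying at $\infty$ with index $2q+2$ --- not $2q+1$ as you assert. By Theorem \ref{expmoment}, its integrability requires $\beta>(2q+1)/(2q+2)$, so your argument would only yield the theorem with $q_\ast$ replaced by $q_{\ast\ast}=\max\{2q+2,\tilde q\}$, a strictly narrower admissible range of $\beta$. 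This is precisely the pitfall flagged in the remark immediately after Theorem \ref{Mil0}. The paper avoids it by running the Gronwall argument on $\E_B[Z_t]$ with the bound $\E_B[I_2]\le h(E_T)\int_0^t\E_B[Z_r]\,\ud E_r$ (no extra $E_T$), which makes the Gronwall exponent $E_T h(E_T)\in\mathrm{RV}_{q+1}(\infty)$, and then combines this with the generalized H\"older inequality $\E[Z_T]\le\E[\xi_4^4(E_T)]^{1/4}\E[(Y_T^{(2)})^2]^{1/4}\E[e^{2E_Th(E_T)}]^{1/2}\delta^\theta$; the binding constraints are then $\beta>2q/(2q+1)$ (from the $Y_T$ moment) and $\beta>(\tilde q-1)/\tilde q$ (from the $k$ factor), which together give exactly $\beta>(q_\ast-1)/q_\ast$. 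To repair your proposal you should reformulate the Gronwall step at the first-moment level: bound $\E_B[Z_t]$ directly, estimate the martingale contributions of $I_3$ and the remainder via $\E_B[\,\cdot\,]\le\E_B[\,\cdot^2\,]^{1/2}$ after the $\sup$, and use the pathwise bound $\sup_s\bigl|\int_0^s B(r)\,\ud E_r\bigr|\le h(E_T)\int_0^t Z_r\,\ud E_r$ without squaring. As stated, the second-moment route does not reach the claimed $\beta$-range.
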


\begin{proof}
Using \eqref{SDE0} and expanding the integrand of the $\ud E_r$ integral 
via the It\^o formula,
\begin{align*}
X_{\tau_{n+1}}
&=X_{\tau_{n}}+\int_{\tau_n}^{\tau_{n+1}}H(E_r)\ud r+\int_{\tau_n}^{\tau_{n+1}} F(E_{\tau_n}, X_{\tau_n})\ud E_r+\int_{\tau_n}^{\tau_{n+1}} G(E_r)\ud B_{E_r}\\
&\ \ +R_{(\tau_n,\tau_{n+1})};\\
R_{(a,b)}
&:=\int_{a}^{b}\int_{a}^{r_2}F_xH\ud r_1\ud E_{r_2}+\int_{a}^{b}\int_{a}^{r_2}\left(F_u+F_xF+\frac{1}{2}F_{xx}G^2\right)\ud E_{r_1}\ud E_{r_2}\\
&+\int_{a}^{b}\int_{a}^{r_2}F_xG\ud B_{E_{r_1}}\ud E_{r_2},
\end{align*}
with all the integrands evaluated at $(E_{r_1}, X_{r_1})$.
This representation together with representation \eqref{010} for $X^\delta$ gives 
$Z_t:=\sup_{0\leq s\leq t}|X_s-X^\delta_s|\leq I_1+I_2+I_3+I_4$, where
\begin{align*}
&I_1:=
\sup_{0\leq s\leq t}\left|\int_0^s \{H(E_r)-H(E_{\tau_{n_r}})\}\ud r\right|;\\
&I_2:=
\sup_{0\leq s\leq t}\left|\int_0^s \{F(E_{\tau_{n_r}}, X_{\tau_{n_r}})-F(E_{\tau_{n_r}}, X_{\tau_{n_r}}^\delta)\}\ud E_r\right|;\\
&I_3:=\!\!\sup_{0\leq s\leq t}\left|\int_0^s \{G(E_r)-G(E_{\tau_{n_r}})\}\ud B_{E_r}\right|;
\ I_4\!:=\!\!\sup_{0\leq s\leq t}\left|\sum_{i=0}^{n_s-1}\!\!R_{(\tau_i,\tau_{i+1})}+R_{(\tau_{n_s},s)}\right|.
\end{align*}
It is easy to observe that 
\begin{align}\label{017}
	I_1\leq KT\delta^{\theta};\ \ \E_B[I_2]\leq h(E_T)\int_0^t \E_B[Z_r]\ud E_r; \ \ 
	\E_B[I_3]\leq\E_B[I_3^2]^{1/2}\leq 2KE_T\delta^{\theta}.
\end{align}
The main technical part concerns the remainder term $I_4$, which contains double integrals involving three different integrators: $\ud r_1\ud E_{r_2}$, $\ud E_{r_1}\ud E_{r_2}$ and $\ud B_{E_{r_1}}\ud E_{r_2}$. We will deal with them one by one below.

In terms of the term driven by $\ud r_1\ud E_{r_2}$, by Assumption $\H_4$ and a discussion similar to \eqref{62}, 
\begin{align}
\E_B\!\!\left[\sup_{0\leq s\leq t}\left|\int_0^s\!\!\int_{\tau_{n_{r_2}}}^{r_2}\!\!\!\!F_xH(E_{r_1}, X_{r_1})\ud r_1\ud E_{r_2}\right|\right]
&\leq 
k(E_T)\E_B[Y_T^{(1)}]\int_0^t\!\!(r_2-\tau_{n_{r_2}})\ud E_{r_2}\notag\\
&\leq Tk(E_T)\E_B[Y_T^{(1)}]\delta,\label{t1}
\end{align}
where $Y_T^{(1)}$ is defined in Proposition \ref{Yp}.
Similarly, as for the second integrator $\ud E_{r_1}\ud E_{r_2}$, 
\begin{align}
\label{t2}
&\E_B\!\left[\sup_{0\leq s\leq t}\left|\int_0^s\!\!\int_{\tau_{n_{r_2}}}^{r_2}\!\!\!\!\!\big\{\!F_u(E_{r_1}\!,\! X_{r_1})\!+\!F_xF(E_{r_1}\!,\! X_{r_1})\!+\!\frac{1}{2}F_{xx}G^2(E_{r_1}\!,\!X_{r_1})\!\big\}\ud E_{r_1}\ud E_{r_2}\right|\right]\notag\\
&\leq \frac{5}{2}k(E_T)\E_B[Y_T^{(1)}]\int_0^t\int_{\tau_{n_{r_2}}}^{r_2}\ud E_{r_1}\ud E_{r_2}\leq \frac{5}{2}E_Tk(E_T)\E_B[Y_T^{(1)}]\delta.
\end{align}
On the other hand, the term driven by $\ud B_{E_{r_1}}\ud E_{r_2}$ requires a careful discussion. We need to estimate $\E_B[\sup_{0\le s\le t}|M_{n_s}+U_s|]$,
where $M_0:=0$, $M_n:=\sum_{i=0}^{n-1}L_i$ for $n\ge 1$,
\[
L_i\!\!:=\!\!\int_{\tau_i}^{\tau_{i+1}}\!\!\int_{\tau_i}^{r_2}\!\!F_xG(E_{r_1}, X_{r_1})\ud B_{E_{r_1}}\!\ud E_{r_2};
\ U_s\!\!:=\!\!\int_{\tau_{n_s}}^{s}\!\!\int_{\tau_{n_s}}^{r_2}\!\!F_xG(E_{r_1}, X_{r_1})\ud B_{E_{r_1}}\!\ud E_{r_2}.
\] 
We first verify that the stochastic integrals $L_i$, $i=0,1,\ldots,n_t-1$, are uncorrelated with respect to $\P_B$. Let $i<j$, so that $\tau_{i+1}\le \tau_j$. Observe that $\E_B[L_iL_j]=\E_B[L_i\E_B[L_j|\mathcal{F}_{E_{\tau_j}}]]$.
By assumption and estimate \eqref{008}, 
\[
\E_B\left[\int_{\tau_j}^{\tau_{j+1}}\left|\int_{\tau_j}^{r_2}F_xG(E_{r_1}, X_{r_1})\ud B_{E_{r_1}}\right|^2\ud E_{r_2}\right]
\leq \delta^2k^2(E_t)\E_B[Y^{(2)}_t]<\infty.
\]
Thus, 
$
\E_B[L_j|\mathcal{F}_{E_{\tau_j}}]
=\int_{\tau_j}^{\tau_{j+1}}\E_B\bigl[\int_{\tau_j}^{r_2}F_xG(E_{r_1}, X_{r_1})\ud B_{E_{r_1}}\big|\mathcal{F}_{E_{\tau_j}}\bigr]\ud E_{r_2}=0
$ due to the conditional Fubini theorem (Theorem 27.17 in \cite{Schilling}) and the martingale property,
thereby yielding the uncorrelatedness.
On the other hand, since $E$ has continuous paths, 
the change-of-variables formula (Theorem 3.1 in \cite{Kobayashi}) implies that $M_n$ can be expressed as
$
\sum_{i=0}^{n-1}\int_{i\delta}^{(i+1)\delta}\int_{i\delta}^{E_{r_2}}F_xG(r_1, X_{D_{r_1-}})\ud B_{r_1}\ud r_2. 
$
The latter representation, together with the proofs of Lemmas 5.7.1 and 10.8.1 of \cite{KloedenPlaten}, shows that the discrete-time process $(M_n)_{n\ge 0}$  
is a square-integrable, $((\mathcal{F}_{n\delta})_{n\ge 0},\P_B)$-martingale starting at 0. 
Therefore, by the BDG inequality \eqref{Burkholder} and the uncorrelatedness of $L_i$'s, 
$
\E_B[\sup_{0\leq s\leq t}M_{n_s}^2]
	\le b_2\sum_{i=0}^{n_t-1}\E_B[L_i^2]; 
$
hence, by the Cauchy-Schwartz inequality, 
\begin{align}
	&\E_B\left[\sup_{0\leq s\leq t}M_{n_s}^2\right]\leq b_2\delta\sum_{i=0}^{n_t-1}\int_{\tau_i}^{\tau_{i+1}}\E_B\left[\int_{\tau_i}^{r_2}\left|F_xG(E_{r_1}, X_{r_1})\right|^2\ud E_{r_1}\right]\ud E_{r_2}\notag\\
	&\leq 2b_2\delta k^2(E_T)\E_B[Y_T^{(2)}]\sum_{i=0}^{n_t-1}\int_{\tau_i}^{\tau_{i+1}}(E_{r_2}-E_{\tau_i})\ud E_{r_2}
	\leq 2b_2E_Tk^2(E_T)\E_B[Y_T^{(2)}]\delta^2.\label{0002}
\end{align}
On the other hand, 
\begin{align}
	\E_B\left[\sup_{0\le s\le t}U_s^2\right]
	&\!\le \E_B\bigg[\!\sup_{0\leq s\leq t}(E_s-E_{\tau_{n_s}})\int_{\tau_{n_s}}^{s}\left|\int_{\tau_{n_s}}^{r_2}F_xG(E_{r_1}, X_{r_1})\ud B_{E_{r_1}}\right|^2\!\!\ud E_{r_2}\bigg]\notag\\
	&\!\leq \delta\int_0^t\E_B\left[\sup_{s\in[r_2, t]}\left|\int_{\tau_{n_s}}^{r_2}F_xG(E_{r_1}, X_{r_1})\ud B_{E_{r_1}}\right|^2\right]\ud E_{r_2}. \label{0001}
\end{align}
Since $\{(\tau_{n_s},r_2): r_2\leq s\leq t\}\subset\{(\tau_{n_{r_2}},u): \tau_{n_{r_2}}\leq u\leq r_2\}$, the integrand 

\noindent$\E_B\bigl[\sup_{s\in[r_2,t]}\bigl|\int_{\tau_{n_s}}^{r_2}F_xG(E_{r_1}, X_{r_1})\ud B_{E_{r_1}}\bigl|^2\bigl]$ is less than or equal to
\begin{align*}
&\E_B\bigg[\sup_{u\in[\tau_{n_{r_2}},r_2]}\bigg|\int_{\tau_{n_{r_2}}}^{u}F_xG(E_{r_1}, X_{r_1})\ud B_{E_{r_1}}\bigg|^2\bigg]\\
&\leq b_2\E_B\left[\int_{\tau_{n_{r_2}}}^{r_2}|F_xG(E_{r_1}, X_{r_1})|^2\ud E_{r_1}\right]\leq 2b_2k^2(E_T)\E_B[Y_T^{(2)}]\delta.
\end{align*}
Hence, \eqref{0001} is dominated above by $2b_2E_Tk^2(E_T)\E_B[Y_T^{(2)}]\delta^2.$
Putting this together with \eqref{0002} yields
\begin{align}\label{0003}
\E_B\left[\sup_{0\leq s\leq t}|M_{n_s}+U_s|^2\right]
\le 8b_2E_Tk^2(E_T)\E_B[Y_T^{(2)}]\delta^2.
\end{align}
By estimates \eqref{t1}, \eqref{t2} and \eqref{0003}, 
\begin{align}\label{018}
	\E_B[I_4]\leq \bigg\{T\E_B[Y_T^{(1)}]+\frac{5}{2}E_T\E_B[Y_T^{(1)}]+(8b_2E_T\E_B[Y_T^{(2)}])^{1/2}\bigg\}
k(E_T)\delta.
\end{align}

Now, combining \eqref{017} and \eqref{018} with $\E_B[Y_T^{(1)}]\leq \sqrt 2\E_B[Y_T^{(2)}]^{1/2}$ yields
$
	\E_B[Z_t]\leq \xi_4(E_T)\E_B[Y_T^{(2)}]^{1/2}\delta^\theta+h(E_T)\int_0^t \E_B[Z_r]\ud E_r,
$
 where
$\xi_4(u):=KT+2Ku+(\sqrt 2T+(5\sqrt 2/2)u+(8b_2u)^{1/2})
k(u)$. 
Applying the Gronwall-type inequality, taking $\E_D$ on both sides, and using the Cauchy-Schwartz inequality gives
\[
	\E[Z_T]\leq \E[\xi_4^4(E_T)]^{1/4}\E[(Y_T^{(2)})^2]^{1/4}\E[e^{2E_Th(E_T)}]^{1/2}\delta^\theta. 
\]
If $h$ and $k$ are constant, then the latter bound is finite due to Propositions \ref{proposition_RV} and \ref{Yp}, thereby yielding the desired result.  
Suppose now that $h$ and $\log k$ are regularly varying and 
note that the function $k^4(u)$ involved in $\xi_4^4(u)$ can be expressed as $e^{4\log k(u)}$. 
Then Theorem \ref{expmoment} and Proposition \ref{Yp} together guarantee the finiteness of the latter bound provided that $\beta>q/(q+1)$, $\beta>(\tilde{q}-1)/\tilde{q}$ and $\beta>2q/(2q+1)$, which requires the value of $\beta$ be restricted as stated in the theorem.   
\end{proof}

\begin{remark}
In the above proof when $h$ and $\log k$ are regularly varying, analyzing the first moment $\E_B[Z_r]$ instead of the second moment $\E_B[Z_r^2]$
is crucial since estimation of $\E_B[Z_r^2]$ would give us a similar convergence result with $q_\ast$ replaced by the larger value $\bar{q}_\ast=\max\{2q+2,\tilde{q}\}$, yielding the result with a narrower range of $\beta$ values. 
Indeed, if we attempted to deal with the second moment $\E_B[Z_r^2]$, we would rely on the estimate $\E_B[I_2^2]\leq E_Th^2(E_T)\int_0^t \E_B[Z_r^2]\ud E_r$ instead of the estimate for $\E_B[I_2]$ appearing in \eqref{017}. This would eventually lead to a bound for $\E[Z_r^2]$ containing a quantity of the form $\E[e^{cE_T^2h^2(E_T)}]$, which is finite for $\beta> (2q+1)/(2q+2)$ only.
\end{remark}

\section{Rates of convergence for It\^o--Taylor-type schemes when $H\equiv 0$}\label{section_5}

This section answers Question \textbf{(B)} raised in Section \ref{section_introduction}. The presentation is restricted to the one-dimensional case since extensions to a multi-dimenional case are not straightforward; the non-commutativity of noises makes a significant difference in the analysis. 
We consider the time-changed SDE \eqref{SDE0} in which a drift with the non-random integrator is not present (i.e.\ $H\equiv 0$)
but in which the coefficient $G(u,x)$ depends on $x$, unlike Theorem \ref{Mil0}. 
Theorem 3.1 of \cite{JumKobayashi} established that the rate of strong convergence for the Euler--Maruyama-type scheme for such an SDE is $1/2-\ve$ for any small $\ve$. A natural question to ask next is whether we can improve the rate of convergence by adopting an approximation scheme of Milstein-type or more general It\^o--Taylor-type (see Chapters 5 and 10 of \cite{KloedenPlaten} for the Milstein and It\^o--Taylor schemes for classical It\^o SDEs). 
 As briefly explained in Section \ref{section_introduction}, 
\textit{the method based on the duality principle between SDEs \eqref{SDE-003-0} and \eqref{SDE-001-0} with $H\equiv 0$ does not help improve the rate of convergence.} This is because that method relies on an error estimate for the approximation of $E$, which would remain unchanged even with a higher order scheme; see Remark 3.2(4) of \cite{JumKobayashi} for details.

Below we employ the approach used in the previous section to establish the rate of strong convergence of an It\^o--Taylor-type scheme. 
This is done by directly estimating the error for the approximation of $X$, thereby avoiding the separate analysis of the error for the approximation of $E$.
However, we first discuss a Milstein-type scheme (which is a special case of the It\^o--Taylor-type scheme) in the hope that the simplest case gives the reader a clear understanding of the complicated notations and discussions of the more general case. Since the arguments we give in this section are similar to the ones given in the proof of Theorem \ref{Mil0}, we only provide a sketch for the proof of each theorem. We also continue to use the notations $\E_B,\,\E_D$ and $\E$ introduced in the beginning of Section \ref{section_3}.

In addition to Assumptions $\H_1$ and $\H_2$, assume that there exists a continuous, nondecreasing function $k:[0,\infty)\rightarrow(0,\infty)$ such that for all $x,y\in\mathbb{R}$ and $u\ge 0$, 
\begin{itemize}
\item[$\H_5$:] $\bullet$ \  $F,\ G,\ GG_x\in C^{1,2}$;
\item[\phantom{$\H_5$:}] $\bullet$ \  $|(GG_x)(u, x)-(GG_x)(u, y)|\le h(u)|x-y|$;
\item[\phantom{$\H_5$:}] $\bullet$ \  $|(GG_x)(u, x)|\le h(u)(1+|x|)$; 
\item[\phantom{$\H_5$:}] $\bullet$ \  $|f_{\alpha}(u,x)|\le k(u)(1+|x|)$,
\end{itemize}  
where $h$ is the function appearing in Assumptions $\H_1$ and $\H_2$, and $f_\alpha$ reads each integrand appearing in \eqref{remainder} below. First, approximate $E$ by $E^\delta$ as in Section \ref{section_4}. Next, define a discrete-time process $(X^\delta_{\tau_n})_{n\in \{0,1,2,\ldots,N\}}$ by $X^\delta_0:=x_0$ and
\begin{align}\label{Milstein0}
	X^\delta_{\tau_{n+1}}
&:=X^\delta_{\tau_{n}}+F(n\delta,X_{\tau_n}^\delta)\delta+G(n\delta, X^\delta_{\tau_n})(B_{(n+1)\delta}-B_{n\delta})\nonumber\\
&\ \ \ +\frac{1}{2}(GG_x)(n\delta, X^\delta_{\tau_n})\big((B_{(n+1)\delta}-B_{n\delta})^2-(\tau_{n+1}-\tau_n)\big).
\end{align}
We adopt continuous interpolation as is \eqref{011} but with an additional term corresponding to the integral of $GG_x$ included. 
As in representation \eqref{010} for the Euler--Maruyama-type scheme, the Milstein-type scheme defined by \eqref{Milstein0} satisfies   
\begin{align}\label{023}
X^\delta_s
&=x_0+\int_{0}^{s}F(E_{\tau_{n_r}},X_{\tau_{n_r}}^\delta)\ud E_r+\int_{0}^{s}G(E_{\tau_{n_r}}, X^\delta_{\tau_{n_r}})\ud B_{E_r}\notag\\
&\ \ \ +\int_0^s\int_{\tau_{n_{r_2}}}^{r_2}(GG_x)(E_{\tau_{n_{r_2}}}, X^\delta_{\tau_{n_{r_2}}})\ud B_{E_{r_1}}\ud B_{E_{r_2}}. 
\end{align}
\begin{theorem}
\label{Milstein} 
Let $X$ be the solution of SDE \eqref{SDE0} with $H\equiv 0$ such that Assumptions $\H_1$, $\H_2$ and $\H_5$ hold.
Assume that the Laplace exponent $\psi$ of $D$ is regularly varying at $\infty$ with index $\beta\in(0,1)$ and that one of the following conditions holds: 
\begin{enumerate}[(a)]
\item $h$ and $k$ are constant and $\beta\in(1/2,1)$;
\item $h$ and $k$ are continuous and nondecreasing, $h$ and $\log k$ are regularly varying at $\infty$ with indices $q\ge 0$ and $\tilde{q}\ge 0$, respectively, and $\beta\in \left((q_{\ast\ast}-1)/q_{\ast\ast},1\right)$, where $q_{\ast\ast}:=\max\{2q+2,\tilde{q}\}$. 
\end{enumerate}
Let $X^\delta$ be the approximation process of Milstein-type defined in \eqref{Milstein0} with continuous interpolation. 
Then there exists a constant $C>0$ not depending on $\delta$ such that for all $\delta\in(0,1)$,
$
	\E\left[\sup_{0\leq s\leq t}|X_s-X^\delta_s|\right]\leq C\delta.
$
Thus, $X^\delta$ converges strongly to $X$ uniformly on $[0,T]$ with order $1$.  
\end{theorem}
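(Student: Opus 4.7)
The plan is to follow the same general strategy as the proof of Theorem \ref{Mil0}, adapted to the present setting in which $G(u,x)$ depends on $x$. This dependence forces two changes: the scheme must retain the Milstein correction $\int\int (GG_x)\,\ud B_{E_{r_1}}\ud B_{E_{r_2}}$ seen in \eqref{023}, and the error analysis must work with the second moment $\E_B[Z_t^2]$ rather than the first moment, because the $G$-difference $\ud B_{E_r}$-integral must be controlled via the BDG inequality \eqref{Burkholder}.

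I would first apply the It\^o formula to each of the integrands $F(E_r,X_r)$, $G(E_r,X_r)$, and $(GG_x)(E_{r_1},X_{r_1})$ on appropriate sub-intervals of the form $[\tau_{n_r},r]$ and $[\tau_{n_{r_2}},r_1]$ of SDE \eqref{SDE0} (with $H\equiv 0$). This produces an It\^o--Taylor expansion
$X_s = x_0 + \int_0^s F(E_{\tau_{n_r}},X_{\tau_{n_r}})\,\ud E_r + \int_0^s G(E_{\tau_{n_r}},X_{\tau_{n_r}})\,\ud B_{E_r} + \int_0^s\!\int_{\tau_{n_{r_2}}}^{r_2}(GG_x)(E_{\tau_{n_{r_2}}},X_{\tau_{n_{r_2}}})\,\ud B_{E_{r_1}}\ud B_{E_{r_2}} + R_s$,
where $R_s$ is a sum of iterated double integrals whose integrators are among $\ud E_{r_1}\ud E_{r_2}$, $\ud E_{r_1}\ud B_{E_{r_2}}$, and $\ud B_{E_{r_1}}\ud E_{r_2}$, and whose integrands are the functions $f_\alpha$ appearing in $\H_5$ (linearly bounded in $x$ with coefficient $k(u)$). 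Subtracting representation \eqref{023} yields $Z_t := \sup_{0\le s\le t}|X_s-X^\delta_s|\le I_1+I_2+I_3+I_4$, where $I_1$, $I_2$, $I_3$ are the differences of the $F$-, $G$-, and $(GG_x)$-terms evaluated at $X_{\tau_{n_r}}$ versus $X^\delta_{\tau_{n_r}}$, and $I_4$ carries $R_s$.

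The Lipschitz bounds of $\H_1$ and $\H_5$ together with Cauchy--Schwarz and BDG give
$\E_B[I_1^2]+\E_B[I_2^2]+\E_B[I_3^2]\le C(E_T+1)\,h^2(E_T)\int_0^t \E_B[Z_r^2]\,\ud E_r$; the estimate for $I_3$ requires two applications of BDG combined with the Lipschitz continuity of $GG_x$ from $\H_5$, reducing it to $|X_{\tau_{n_{r_2}}}-X^\delta_{\tau_{n_{r_2}}}|^2$. The main technical work is estimating $I_4$. Each iterated double integral in $R_s$ has inner integration on a sub-interval of $E$-length $\le \delta$, contributing a factor $\delta^{1/2}$ (stochastic inner integrator) or $\delta$ (deterministic $\ud E$ inner integrator); for outer $\ud E_{r_2}$ integrators, Cauchy--Schwarz produces another $\delta^{1/2}$; for outer $\ud B_{E_{r_2}}$ integrators, the summands over the partition $\{\tau_i\}$ are uncorrelated $((\mathcal{F}_{n\delta})_{n\ge 0},\P_B)$-martingale differences (via the conditional Fubini theorem and the change-of-variables formula, exactly as in the $M_n$ argument in the proof of Theorem \ref{Mil0}), so outer BDG sums $L^2$ norms rather than suprema. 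Collecting these contributions yields $\E_B[I_4^2]\le Ck^2(E_T)(1+E_T^2)\E_B[Y_T^{(2)}]\delta^2$.

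Assembling the estimates produces $\E_B[Z_t^2]\le \xi(E_T)\E_B[Y_T^{(2)}]\delta^2 + C(E_T+1)h^2(E_T)\int_0^t \E_B[Z_r^2]\,\ud E_r$ with $\xi(u):=Ck^2(u)(1+u^2)$. The Gronwall-type inequality in Chapter IX.6a, Lemma 6.3 of \cite{JacodShiryaev} at $t=T$, followed by taking $\E_D$ and Cauchy--Schwarz, gives $\E[Z_T^2]\le C\,\E[\xi^2(E_T)]^{1/2}\E[(Y_T^{(2)})^2]^{1/2}\E\bigl[e^{cE_T(E_T+1)h^2(E_T)}\bigr]^{1/2}\delta^2$, and the conclusion $\E[Z_T]\le C\delta$ follows from Jensen's inequality. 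The hard part will be verifying finiteness of the three expectations under the stated index constraints. In case (b) the exponent $E_T(E_T+1)h^2(E_T)$ lies in $\mathrm{RV}_{2q+2}(\infty)$ by Lemma \ref{lemma_RV}, so Theorem \ref{expmoment} requires $\beta>(2q+1)/(2q+2)$; the factor $\xi^2$ contains $k^4(E_T)=e^{4\log k(E_T)}$ with $\log k\in\mathrm{RV}_{\tilde q}(\infty)$ and so requires $\beta>(\tilde q-1)/\tilde q$; Proposition \ref{Yp} supplies the moments of $Y_T^{(2)}$ under the weaker constraint $\beta>2q/(2q+1)$. The combined requirement is $\beta>(q_{\ast\ast}-1)/q_{\ast\ast}$ with $q_{\ast\ast}=\max\{2q+2,\tilde q\}$, matching the theorem. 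In case (a), $q=\tilde q=0$ gives $q_{\ast\ast}=2$ and $\beta>1/2$, also matching.
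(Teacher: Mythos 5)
Your proposal is correct and follows essentially the same path as the paper's proof: the same It\^o--Taylor decomposition $Z_t\le I_1+I_2+I_3+I_4$, the same shift to second moments $\E_B[Z_t^2]$ to accommodate the BDG inequality on the $G$-difference term, the same martingale-orthogonality argument for the remainder integrals, the same stochastic Gronwall inequality, and the same application of Proposition \ref{Yp} and Theorem \ref{expmoment} to confirm finiteness of the resulting moments. One small expository slip: the remainder $R_s$ produced by expanding $F$, $G$ and $GG_x$ contains triple as well as double iterated integrals (the triple ones coming from the expansion of $GG_x$ inside the Milstein correction, e.g.\ $\int\!\int\!\int (G_xG)_xG\,\ud B_{E_{r_1}}\ud B_{E_{r_2}}\ud B_{E_{r_3}}$), so your description of $R_s$ as a sum of double integrals with integrators among $\ud E\ud E$, $\ud E\ud B_E$, $\ud B_E\ud E$ is incomplete; however, your final estimate $\E_B[I_4^2]\le C k^2(E_T)(1+E_T^2)\E_B[Y_T^{(2)}]\delta^2$ agrees with the paper's $\xi_5(E_T)\E_B[Y_T^{(2)}]\delta^2$, and the argument goes through as claimed.
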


\begin{proof}
By SDE \eqref{SDE0} with $H\equiv 0$ and the It\^o formula, 
\begin{align}
X_{\tau_{n+1}}
&=X_{\tau_{n}}+\int_{\tau_n}^{\tau_{n+1}} F(E_{\tau_n}, X_{\tau_n})\ud E_r+\int_{\tau_n}^{\tau_{n+1}} G(E_{\tau_n}, X_{\tau_n})\ud B_{E_r}\notag\\
&\ \ \ \ \ \ \ \ \ +\int_{\tau_n}^{\tau_{n+1}}\int_{\tau_n}^{r_2}(GG_x)(E_{\tau_n}, X_{\tau_n})\ud B_{E_{r_1}}\ud B_{E_{r_2}}+R_{(\tau_n,\tau_{n+1})};\notag
\end{align}
\begin{align}
R_{(a,b)}
&:=\int_{a}^{b}\int_{a}^{r_2}\!\!\left(F_u+F_xF+\frac{1}{2}F_{xx}G^2\right)\ud E_{r_1}\ud E_{r_2}+\int_{a}^{b}\int_{a}^{r_2}\!\!F_xG\ud B_{E_{r_1}}\ud E_{r_2}\notag\\
&\ \ +\int_{a}^{b}\int_{a}^{r_2}\!\!\left(G_t+G_xF+\frac{1}{2}G^2G_{xx}\right)\!\ud E_{r_1}\ud B_{E_{r_2}}\notag\\
&\ \ +\int_{a}^{b}\int_{a}^{r_3}\!\!\int_{a}^{r_2}\!\!\left((G_xG)_t+(G_xG)_xF+\frac{1}{2}(G_xG)_{xx}G^2\right)\!\ud E_{r_1}\ud B_{E_{r_2}}\ud B_{E_{r_3}}\notag\\
&\ \ +\int_{a}^{b}\int_{a}^{r_3}\!\!\int_{a}^{r_2}\!\!(G_xG)_xG\ud B_{E_{r_1}}\ud B_{E_{r_2}}\ud B_{E_{r_3}},\label{remainder}
\end{align}
with all the integrands for $R_{(a,b)}$ evaluated at $(E_{r_1}, X_{r_1})$.
From this representation together with representation \eqref{023} for $X^\delta$, it follows that $Z_t:=\sup_{0\leq s\leq t}|X_s-X^\delta_s|\leq I_1+I_2+I_3+I_4$, where 
\begin{align*}
I_1&:=
\sup_{0\leq s\leq t}\left|\int_0^s \{F(E_{\tau_{n_r}}, X_{\tau_{n_r}})-F(E_{\tau_{n_r}}, X_{\tau_{n_r}}^\delta)\}\ud E_r\right|;\\
I_2&:=\sup_{0\leq s\leq t}\left|\int_0^s \{G(E_{\tau_{n_r}}, X_{\tau_{n_r}})-G(E_{\tau_{n_r}}, X_{\tau_{n_r}}^\delta)\}\ud B_{E_r}\right|;\\
I_3&:=\sup_{0\leq s\leq t}\left|\int_0^s\!\!\int_{\tau_{n_{r_2}}}^{r_2}\!\! \bigl\{GG_x(E_{\tau_{n_{r_2}}}, X_{\tau_{n_{r_2}}})-GG_x(E_{\tau_{n_{r_2}}}, X_{\tau_{n_{r_2}}}^\delta)\bigr\}\ud B_{E_{r_1}}\ud B_{E_{r_2}}\right|;\\
I_4&:=\sup_{0\leq s\leq t}\left|\sum_{i=0}^{n_s-1}R_{(\tau_i,\tau_{i+1})}+R_{(\tau_{n_s},s)}\right|.
\end{align*}
Arguments as in the proofs of Theorems \ref{EU} and \ref{Mil0} yield 
\begin{align*}
&\E_B[I_1^2]
\leq E_Th^2(E_T)\int_0^t\E_B[Z_r^2]\ud E_r;\ \ \  
\E_B[I_2^2]
\leq b_2h^2(E_T)\int_0^t\E_B[Z_r^2]\ud E_r;\\
&\E_B[I_3^2]\leq b_2\delta h^2(E_T)\int_0^t\E_B[Z_r^2]\ud E_r; 
\ \ \  \E_B[I_4^2]\leq \xi_5(E_T)\E_B[Y_T^{(2)}]\delta^2,
\end{align*}
where $\xi_5(u):=c(u+u^2)k^2(u)$ with some constant $c>0$ and $Y_T^{(2)}$ is defined in Proposition \ref{Yp}. (To estimate $\E_B[I_4^2]$, recall the methods used for obtaining \eqref{t2} and \eqref{0003}.)

Thus,
$
\E_B[Z_t^2]\leq 4 \xi_5(E_T)\E_B[Y_T^{(2)}]\delta^2+ 4h^2(E_T)(E_T+2b_2)\int_0^t\E_B[Z_r^2]\ud E_r, 
$
which implies 
$
\E_B[Z_T^2]\leq 4 \xi_5(E_T)\E_B[Y_T^{(2)}]e^{4h^2(E_T)(E_T+2b_2)E_T}\delta^2. 
$
The desired result now follows as in the last part of the proof of Theorem \ref{Mil0}.
\end{proof}

To discuss a general It\^o--Taylor-type scheme, we recall Chapters 5 and 9 of \cite{KloedenPlaten} and introduce shorthand notations for the following operators:
\[
L^0:=\frac{\p}{\p u}+F\frac{\p}{\p x}+\frac{1}{2}G^2\frac{\p^2}{\p x^2}; \ \ \ L^1:=G\frac{\p}{\p x}.
\]
For a multi-index $\alpha=(j_1,\dots,j_\ell)$ with $j_i\in\{0,1\}$ for $i=1,\dots, \ell$ and $\ell\ge 1$, let 
\[
f_\alpha:=L^{j_1}\cdots L^{j_{l-1}}G^{j_l}\ \, \mbox{if}\ \, \ell\ge 2\ \ \ \mbox{and}\ \ \  f_\alpha:=G^{j_1} \ \, \mbox{if}\ \, \ell=1,
\]
where  $G^0:=F$ and $G^1:=G$. Define the multiple integral of the function $f_\alpha$ by
\[
	I_\alpha(f_\alpha(E_{\cdot},X_{\cdot}))_{a,b}:=\int_{a\leq r_1\leq\cdots\leq r_\l\leq b}f_\alpha(E_{r_1},X_{r_1})\ud B^{(j_1)}_{E_{r_1}}\cdots\ud B^{(j_\ell)}_{E_{r_\ell}}
\]
with $\ud B^{(0)}_{E_r}:=\ud E_r$ and $\ud B^{(1)}_{E_r}:=\ud B_{E_r}.$
Let $v$ denote the multi-index with length $\ell=0$, and let $f_v(u,x)=x$ and $I_v(f)_{a,b}=f(a)$. 
Also, for each $\gamma$ such that $2\gamma$ is a positive integer (so that $\gamma=0.5, 1, 1.5, 2, \ldots$), let
\[
	\mathcal{A}_\gamma:=\left\{\alpha: \ell(\alpha)+n(\alpha)\leq 2\gamma \ \mbox{or}\ \ell(\alpha)=n(\alpha)=\gamma+\frac{1}{2}\right\},
\]
where $\ell(\alpha)$ denotes the length of $\alpha$ and $n(\alpha)$ denotes the number of the zero components of $\alpha$. 

Define a discrete-time process $(X^\delta_{\tau_n})_{n\in \{0,1,2,\ldots,N\}}$ by $X^{\delta }_0:=x_0$ and 
\begin{align}\label{ItoTaylor}
X^{\delta }_{\tau _{n+1}}:=X^{\delta }_{\tau _{n}}+\sum_{\alpha \in \mathcal A_{\gamma}\backslash\{v\}}I_\alpha(f_\alpha(E_{\tau_n}, X^\delta_{\tau_n}))_{\tau_n,\tau_{n+1}},
\end{align}
with continuous interpolation as in the Milstein-type scheme but with some additional terms included. 
For the solution $X$ of SDE \eqref{SDE0} with $H\equiv 0$, a repeated use of the It\^o formula yields
\begin{align*}
X_{t}&=x_0+\sum_{\alpha \in \mathcal A_{\gamma}\backslash\{v\}}\left(\sum_{i=0}^{n_t-1}I_\alpha(f_\alpha(E_{\tau_{i}}, X_{\tau_{i}}))_{\tau_{i},\tau_{i+1}}+I_\alpha(f_\alpha(E_{\tau_{n_t}}, X_{\tau_{n_t}}))_{\tau_{n_t},t}\right)\\
&\ \ \ +\sum_{\alpha \in \mathcal{R}(\mathcal{A}_\gamma)}\left(\sum_{i=0}^{n_t-1}I_\alpha(f_\alpha(E_{\cdot}, X_{\cdot}))_{\tau_{i},\tau_{i+1}}+I_\alpha(f_\alpha(E_{\cdot}, X_{\cdot}))_{\tau_{n_t},t}\right),
\end{align*}
where $\mathcal{R}(\mathcal{A}_\gamma)$ is the remainder set of multi-indices given by
\[
	\mathcal{R}(\mathcal{A}_\gamma):=\{\alpha\notin\mathcal{A}_\gamma,\ \mbox{and}\ -\alpha:=(j_2,\cdots,j_{\ell})\in \mathcal{A}_\gamma\}.
\]
We also let $\alpha -:=(j_1,\dots, j_{\ell-1})$.

Assume that there exist two families of continuous, nondecreasing functions $\{h_\alpha:[0,\infty)\rightarrow [0,\infty):\alpha\in\mathcal{A}_\gamma\backslash v\}$ and $\{k_\alpha:[0,\infty)\rightarrow (0,\infty):\alpha\in\mathcal{R}(\mathcal{A}_\gamma)\}$  such that for all $u\in[0,T]$ and $x,y\in\R$, 
\begin{enumerate}
\item[$\H_6$:] $\bullet$ \ $f_{-\alpha}\in C^{1,2}$ for $\alpha\in\mathcal{A}_\gamma\cup\mathcal{R}(\mathcal{A}_\gamma)\backslash v$;
\item[\phantom{$\H_6$:}] $\bullet$ \ 
$|f_\alpha(u,x)-f_\alpha(u,y)|\leq h_\alpha(u)|x-y|$ for $\alpha\in\mathcal{A}_\gamma\backslash v$; 
\item[\phantom{$\H_6$:}] $\bullet$ \ $|f_\alpha(u,x)|\leq h_\alpha(u)(1+|x|)$ for $\alpha\in\mathcal{A}_\gamma\backslash v$; 
\item[\phantom{$\H_6$:}] $\bullet$ \ $|f_\alpha(u,x)|\leq k_\alpha(u)(1+|x|)$ for $\alpha\in\mathcal{R}(\mathcal{A}_\gamma)$.
\end{enumerate}

\begin{theorem}\label{HighEU}
Let $X$ be the solution of SDE \eqref{SDE0} with $H\equiv 0$ such that Assumption $\H_6$ holds with $h_\alpha$ and $\log k_\alpha$ being regularly varying at $\infty$ 
with indices $q_\alpha\ge 0$ and $\tilde q_\alpha\ge 0$, respectively. 
Assume further that the Laplace exponent $\psi$ of $D$ is regularly varying at $\infty$ with index $\beta\in(0,1)$.
Let $X^\delta$ be the approximation process of It\^o--Taylor-type defined in \eqref{ItoTaylor} with continuous interpolation. 
If $\beta\in \left((q_{\ast\ast\ast}-1)/q_{\ast\ast\ast},1\right)$, where 
$
	q_{***}:=\max\left\{
		\max_{\alpha\in\mathcal{A}_\gamma\backslash v} (2q_\alpha+2),\,
		\max_{\alpha\in \mathcal{R}(\mathcal{A}_\gamma)}\tilde q_{\alpha}
		\right\},
$
then there exists a constant $C>0$ not depending on $\delta$ such that for all $\delta\in(0,1)$,
$
 	\E\left[\sup_{0\leq s\leq t}|X_s-X^\delta_s|\right]\leq C\delta^{\gamma}.
$
Thus, $X^\delta$ converges strongly to $X$ uniformly on $[0,T]$ with order $\gamma$.  Moreover, if both $h_\alpha$ and $k_\alpha$ are constant for all $\alpha\in\mathcal{A}_\gamma\backslash v$ and $\alpha\in\mathcal{R}(\mathcal{A}_\gamma)$, respectively, the same conclusion holds as long as $\beta\in(1/2,1)$.
\end{theorem}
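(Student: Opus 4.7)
The plan is to mimic the proofs of Theorems \ref{Mil0} and \ref{Milstein}, extending them to the general It\^o--Taylor truncation order $\gamma$. First, apply the time-changed It\^o formula (Theorem 3.1 of \cite{Kobayashi}) repeatedly to the solution $X$ of SDE \eqref{SDE0} with $H\equiv 0$ to obtain the expansion for $X_t$ displayed right before Assumption $\mathcal{H}_6$: a sum of iterated integrals indexed by $\alpha\in\mathcal{A}_\gamma\backslash v$ with integrands frozen at grid points, plus a remainder indexed by $\mathcal{R}(\mathcal{A}_\gamma)$. Subtracting the continuous-interpolation representation of $X^\delta$, the error $Z_t:=\sup_{0\le s\le t}|X_s-X^\delta_s|$ is dominated by $\sum_{\alpha\in\mathcal{A}_\gamma\backslash v}J_\alpha(t)+R(t)$, where each $J_\alpha(t)$ carries the difference $f_\alpha(E_{\tau_{n_\cdot}},X_{\tau_{n_\cdot}})-f_\alpha(E_{\tau_{n_\cdot}},X^\delta_{\tau_{n_\cdot}})$ against the iterated driver of $\alpha$, while $R(t)$ collects the $\alpha\in\mathcal{R}(\mathcal{A}_\gamma)$ contributions with integrands bounded via $k_\alpha$.

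Working conditionally on the subordinator (under $\E_B$), I would next estimate $\E_B[J_\alpha(t)^2]$ using the Lipschitz bound on $f_\alpha$ and the BDG inequality \eqref{Burkholder} applied iteratively to the stochastic components of the driver; this produces a bound of the form $C_\alpha h_\alpha^2(E_T)\int_0^t\E_B[Z_r^2]\,\dd E_r$ after the innermost layers over each block $[\tau_i,\tau_{i+1}]$ are estimated by constants. For each $\alpha\in\mathcal{R}(\mathcal{A}_\gamma)$, the iterated integral of length $\ell(\alpha)$ with $n(\alpha)$ zero-components should contribute a factor of order $\delta^{(\ell(\alpha)+n(\alpha))/2}\ge \delta^{\gamma+1/2}$ per block, so that summing over the $n_t\le E_T/\delta$ blocks yields a $\delta^{2\gamma}$ bound in $L^2(\P_B)$, with a pre-factor polynomial in $E_T$ times $\max_\alpha k_\alpha^2(E_T)$. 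The block-wise estimation extends the two ingredients of Theorem \ref{Milstein}: the change-of-variables formula plus the conditional Fubini theorem rewrite each partial sum as a square-integrable $((\mathcal{F}_{n\delta}),\P_B)$-martingale, so BDG yields uncorrelatedness-over-blocks, and the final incomplete block is handled by a Cauchy--Schwarz-plus-BDG envelope.

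Combining these estimates and applying the Gronwall-type inequality (Chapter IX.6a, Lemma 6.3 of \cite{JacodShiryaev}) with stochastic driver $\dd E_r$ yields
\begin{align*}
\E_B[Z_T^2]\le \xi(E_T)\,\E_B[Y_T^{(2)}]\,\exp\bigl(C h_\ast^2(E_T)E_T(E_T+C')\bigr)\,\delta^{2\gamma},
\end{align*}
where $h_\ast(u):=\max_{\alpha\in\mathcal{A}_\gamma\backslash v}h_\alpha(u)\in\mathrm{RV}_{\max q_\alpha}(\infty)$ and $\xi(u)$ is polynomial in $u$ times $\max_\alpha k_\alpha^2(u)$. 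Taking $\E_D$ on both sides and applying Cauchy--Schwarz, Proposition \ref{Yp} gives $\E[(Y_T^{(2)})^2]<\infty$; the pre-exponential term has all moments finite by Theorem \ref{expmoment} because $k_\alpha^p=\exp(p\log k_\alpha)$ with $\log k_\alpha\in\mathrm{RV}_{\tilde q_\alpha}(\infty)$ and $\tilde q_\alpha\le q_{\ast\ast\ast}<1/(1-\beta)$; and the exponential factor has its exponent in $\mathrm{RV}_{2\max q_\alpha+2}(\infty)\subset\mathrm{RV}_{q_{\ast\ast\ast}}(\infty)$, hence finite expectation by Theorem \ref{expmoment}. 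The constant-$h_\alpha,k_\alpha$ special case reduces the exponent to an element of $\mathrm{RV}_2(\infty)$, giving finiteness for $\beta\in(1/2,1)$ exactly as stated.

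The main obstacle I anticipate is the combinatorial tracking within $\mathcal{R}(\mathcal{A}_\gamma)$: for a general $\gamma$ one must prove, by induction on $\ell(\alpha)$ up to $2\gamma$, that each iterated integral of mixed drivers $\dd E$ and $\dd B_E$ behaves as in its classical It\^o--Taylor counterpart (Section 5.2 of \cite{KloedenPlaten}), while preserving the $\P_B$-conditional martingale structure so that the exponent $q_{\ast\ast\ast}$ entering Theorem \ref{expmoment} is sharp and no worse than the value dictated by the pair $(\ell,n)$. The argument for Theorem \ref{Milstein} handles only length-three remainders explicitly; generalizing it boils down to iterating the block-martingale/uncorrelatedness trick and the BDG-envelope estimate. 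Once this is in place, the reduction to Gronwall and Theorem \ref{expmoment} proceeds as above, and the localization device described in Remark \ref{remark_localization}(2) justifies the Gronwall step rigorously.
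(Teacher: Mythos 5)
Your proposal is correct and follows essentially the same route as the paper: the same decomposition of $Z_t$ into Lipschitz‐controlled terms over $\mathcal{A}_\gamma\backslash v$ and remainder terms over $\mathcal{R}(\mathcal{A}_\gamma)$, the same conditional (under $\E_B$) Gronwall argument with stochastic driver $\dd E$, and the same reduction to Theorem \ref{expmoment} and Proposition \ref{Yp} for finiteness of the $\E_D$-integrals — with the block-wise BDG/uncorrelatedness estimates you describe being exactly what the paper packages as Lemmas \ref{single} and \ref{multi}. The only blemish is a harmless sign slip in the per-block comparison (you write $\delta^{(\ell(\alpha)+n(\alpha))/2}\ge\delta^{\gamma+1/2}$ where for $\delta<1$ the magnitude inequality goes the other way, as $\ell(\alpha)+n(\alpha)\ge 2\gamma+1$), and your per-block-then-sum heuristic implicitly assumes the outermost integrator carries a martingale; the paper's $\phi(\alpha)$ with the separate case $\ell(\alpha)=n(\alpha)$ handles the all-deterministic iterated integrals, where orthogonality fails but the forced extra power of $\delta$ compensates.
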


Our proof for this theorem relies on the following two lemmas that help determine a bound for an integral involving $f_\alpha$ with $\alpha\in \mathcal{R}(\mathcal{A}_\gamma)$ in the way in which $\E_B[I_4]$ in the proof of Theorem \ref{Mil0} was estimated. The two lemmas can be regarded as generalizations of Lemmas 5.7.3 and 10.8.1 in \cite{KloedenPlaten}, respectively, and we omit the proofs since they are similar to the ones given in \cite{KloedenPlaten}. 
Note that we will only need the second lemma in order to prove Theorem \ref{HighEU}, but we list the first lemma as well since it plays an important role in proving the second lemma. In the statements of the lemmas, we assume $f$ is a given function for which all multiple integrals appearing in the proofs are well-defined. 

\begin{lemma} \label{single} For any multi-index $\alpha$ and $r\ge 0$, 
\[
\E_B\bigg[\!\sup_{\tau_{n_r}\!\leq\! \sigma\leq r}\!\!I_{\alpha}(f(E_{\cdot},X_{\cdot}))_{\tau_{n_r},\sigma}^2\bigg]\leq \E_B\bigg[\!\sup_{\tau_{n_r}\leq \sigma\leq r}\!\!|f(E_\sigma,X_\sigma)|^2\bigg]b_2^{\l(\alpha)-n(\alpha)}\delta^{\l(\alpha)+n(\alpha)}.
\]
\end{lemma}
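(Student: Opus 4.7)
The plan is to prove the bound by induction on the length $\l(\alpha)$, exploiting the recursive structure
\[
I_\alpha(f(E_\cdot,X_\cdot))_{\tau_{n_r},\sigma}=\int_{\tau_{n_r}}^\sigma I_{\alpha-}(f(E_\cdot,X_\cdot))_{\tau_{n_r},u}\,\ud B^{(j_\l)}_{E_u},
\]
where $\alpha-:=(j_1,\ldots,j_{\l-1})$, together with the two key facts that (a) under $\P_B$ the path of $E$ is deterministic (since $E$ is a functional of $D$ and the product-space construction in Section \ref{section_3} decouples $B$ and $D$), and (b) $E_r-E_{\tau_{n_r}}\le\delta$ by definition of the grid in Section \ref{section_4}.

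For the base case $\l(\alpha)=1$, I would handle $\alpha=(0)$ by Cauchy--Schwartz to obtain $\sup_\sigma|I_{(0)}|^2\le(E_r-E_{\tau_{n_r}})\int_{\tau_{n_r}}^r f^2\,\ud E_u\le\delta^2\sup|f|^2$, matching $b_2^0\delta^2$ (since $\l-n=0$ and $\l+n=2$). For $\alpha=(1)$, the process $\sigma\mapsto\int_{\tau_{n_r}}^\sigma f\,\ud B_{E_u}$ is a continuous $(\mathcal{F}_{E_\sigma})$-local martingale, so the BDG inequality \eqref{Burkholder} combined with $[B\circ E,B\circ E]_u=E_u$ gives $\E_B[\sup_\sigma|I_{(1)}|^2]\le b_2\E_B[\int_{\tau_{n_r}}^r f^2\,\ud E_u]\le b_2\delta\,\E_B[\sup|f|^2]$, matching $b_2^1\delta^1$.

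For the inductive step, suppose the bound holds for all multi-indices of length $\l-1$. If $j_\l=0$, Cauchy--Schwartz against $\ud E_u$ on an interval of length $\le\delta$ yields $\sup_\sigma|I_\alpha|^2\le\delta^2\sup_u|I_{\alpha-}|^2$; the inductive hypothesis and the identities $\l(\alpha)-n(\alpha)=\l(\alpha-)-n(\alpha-)$, $\l(\alpha)+n(\alpha)=\l(\alpha-)+n(\alpha-)+2$ then close the case. If $j_\l=1$, BDG gives
\[
\E_B\Bigl[\sup_{\tau_{n_r}\le\sigma\le r}|I_\alpha|^2\Bigr]\le b_2\,\E_B\left[\int_{\tau_{n_r}}^r |I_{\alpha-}(f)_{\tau_{n_r},u}|^2\,\ud E_u\right]\le b_2\delta\,\E_B\Bigl[\sup_{\tau_{n_r}\le u\le r}|I_{\alpha-}|^2\Bigr],
\]
where the uniform bound $E_r-E_{\tau_{n_r}}\le\delta$ is factored out of the integral (legitimate because $\ud E_u$ is deterministic under $\E_B$, and $\{(u,\sigma):\tau_{n_r}\le u\le\sigma\le r\}\subset\{(u,\sigma):\tau_{n_r}\le u\le r\}$ so $|I_{\alpha-}(f)_{\tau_{n_r},u}|\le \sup_u|I_{\alpha-}|$). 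Invoking the inductive hypothesis and the identities $\l(\alpha)-n(\alpha)=\l(\alpha-)-n(\alpha-)+1$, $\l(\alpha)+n(\alpha)=\l(\alpha-)+n(\alpha-)+1$ finishes this case.

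The main obstacle is the bookkeeping in the $j_\l=1$ step: one must justify pulling $\E_B$ inside the $\ud E_u$-integral and simultaneously replacing $|I_{\alpha-}(f)_{\tau_{n_r},u}|^2$ by its supremum over $u\in[\tau_{n_r},r]$ before applying the inductive hypothesis. Both moves are clean once one invokes the product-space setup of Section \ref{section_3}, under which Fubini applies to $\E_B$ versus $\ud E_u$ exactly as in the classical Brownian case, and the only novelty compared to Lemma 5.7.3 of \cite{KloedenPlaten} is the substitution of the time-changed quadratic variation $[B\circ E,B\circ E]=E$ for the Lebesgue measure.
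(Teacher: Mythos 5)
Your induction on $\l(\alpha)$ — Cauchy--Schwartz on the $\ud E$ steps, BDG with $[B\circ E,B\circ E]=E$ on the $\ud B_{E}$ steps, and the increment bound $E_r-E_{\tau_{n_r}}\le\delta$ — is correct and is essentially the argument the paper intends, since it omits the proof and refers to Lemma 5.7.3 of \cite{KloedenPlaten}, of which your proof is the direct time-changed adaptation. The only remark is that your "main obstacle" is not really one: no Fubini is needed in the $j_\l=1$ step, as the integrand can be bounded pathwise by its supremum and $\int_{\tau_{n_r}}^r\ud E_u\le\delta$ before taking $\E_B$.
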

\begin{lemma}\label{multi} For any multi-index $\alpha$ and $t\ge 0$, 
\begin{align*}
&\E_B\left[\sup_{0\leq s\leq t}\left|\sum_{i=0}^{{n_s}-1}I_\alpha(f(E_\cdot, X_\cdot))_{\tau_i,\tau_{i+1}}+I_\alpha(f(E_\cdot, X_\cdot))_{\tau_{n_s},s}\right|^2\right]\\
&\leq(4b_2^{\ell(\alpha)-n(\alpha)+1}+E_t)\delta^{\phi(\alpha)}\int_0^t\E_B\left[\sup_{0\leq \sigma\leq r}|f(E_\sigma,X_\sigma)|^2\right]\ud E_r,
\end{align*}
where $\phi(\alpha):=\l(\alpha)+n(\alpha)-1$ if $\l(\alpha)\ne n(\alpha)$, and $\phi(\alpha):=2\l(\alpha)-2$ if $\l(\alpha)= n(\alpha)$. 
\end{lemma}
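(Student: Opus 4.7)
My plan is to combine the Riemann-type sum with the end-point piece into a single concatenated integral and then split the proof according to whether $\ell(\alpha)=n(\alpha)$. Telescoping the inner integrals gives the identity
\[
\sum_{i=0}^{n_s-1}I_\alpha(f(E_\cdot,X_\cdot))_{\tau_i,\tau_{i+1}}+I_\alpha(f(E_\cdot,X_\cdot))_{\tau_{n_s},s}=\int_0^s I_{\alpha-}(f(E_\cdot,X_\cdot))_{\tau_{n_r},r}\,\ud B^{(j_\ell)}_{E_r}
\]
for every $s\in[0,t]$, where $\alpha-:=(j_1,\dots,j_{\ell-1})$, so one need only control the second moment of the supremum in $s$ of this single integral.

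When $\ell(\alpha)\ne n(\alpha)$, I would show that the discrete process $M_n:=\sum_{i=0}^{n-1}I_\alpha(f(E_\cdot,X_\cdot))_{\tau_i,\tau_{i+1}}$ is a square-integrable $(\F_{n\delta},\P_B)$-martingale. If $j_\ell=1$ this is immediate from the outer $\ud B_{E_\cdot}$-integrator together with the change-of-variables formula of~\cite{Kobayashi} (as in the proof of Theorem~\ref{Mil0}); if instead $j_\ell=0$ but some inner $j_i=1$, one rewrites $I_\alpha(f)_{\tau_n,\tau_{n+1}}$ after the change of variables as a finite $u$-integral of a mean-zero inner stochastic integral and commutes $\E_B[\,\cdot\mid\F_{n\delta}]$ past the outer deterministic $\ud u$-integrations via the classical Fubini theorem. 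Doob's $L^2$ inequality combined with orthogonality of martingale increments then gives $\E_B[\sup_{n\le n_t}M_n^2]\le 4\sum_{i=0}^{n_t-1}\E_B[I_\alpha(f)_{\tau_i,\tau_{i+1}}^2]$, and applying Lemma~\ref{single} to each summand, rewriting $\delta=\int_{\tau_i}^{\tau_{i+1}}\ud E_r$, and shifting indices by monotonicity of $r\mapsto \E_B[\sup_{0\le\sigma\le r}|f(E_\sigma,X_\sigma)|^2]$ converts this into a bound of the form $4b_2^{\ell(\alpha)-n(\alpha)}\delta^{\ell(\alpha)+n(\alpha)-1}\int_0^t\E_B[\sup_{0\le\sigma\le r}|f|^2]\,\ud E_r$. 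The residual boundary term $I_\alpha(f)_{\tau_{n_s},s}$ is controlled by applying Lemma~\ref{single} on each $[\tau_n,\tau_{n+1}]$ and summing over $n\le n_t$, which contributes the same $\delta$-order; combining via $(a+b)^2\le 2a^2+2b^2$ then produces the coefficient $4b_2^{\ell(\alpha)-n(\alpha)+1}$ multiplying $\delta^{\ell(\alpha)+n(\alpha)-1}=\delta^{\phi(\alpha)}$ in the stated bound.

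When $\ell(\alpha)=n(\alpha)$, every integrator is $\ud E$ and no martingale structure is available, but the displayed identity reduces the problem to a single $\ud E$-integral. The Cauchy--Schwarz inequality gives
\[
\Big|\int_0^s I_{\alpha-}(f)_{\tau_{n_r},r}\,\ud E_r\Big|^2\le E_s\int_0^s I_{\alpha-}(f)_{\tau_{n_r},r}^2\,\ud E_r\le E_t\int_0^t I_{\alpha-}(f)_{\tau_{n_r},r}^2\,\ud E_r,
\]
and feeding Lemma~\ref{single} into the integrand (noting that $\alpha-$ has $\ell(\alpha-)=\ell(\alpha)-1$ and $n(\alpha-)=n(\alpha)-1$, so $\ell(\alpha-)-n(\alpha-)=0$) supplies the factor $\delta^{(\ell-1)+(n-1)}=\delta^{2\ell-2}=\delta^{\phi(\alpha)}$ together with the integral $\int_0^t\E_B[\sup_{0\le\sigma\le r}|f|^2]\,\ud E_r$ multiplied by $E_t$, producing exactly the second summand in the stated coefficient. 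Taking the larger of the two case-by-case bounds yields the uniform factor $(4b_2^{\ell(\alpha)-n(\alpha)+1}+E_t)\delta^{\phi(\alpha)}$.

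The main technical obstacle, in my view, is verifying the discrete martingale property of $(M_n)$ in the sub-case $j_\ell=0$, $\ell(\alpha)\ne n(\alpha)$: because the outermost integrator is $\ud E$ rather than $\ud B_{E_\cdot}$, the martingale property is not automatic and must be derived by a careful conditional-Fubini-plus-change-of-variables argument that pushes $\E_B[\,\cdot\mid\F_{n\delta}]$ onto the deepest $\ud B_\cdot$. A secondary bookkeeping technicality is the monotonicity-based index shift needed to pass from the Riemann-type sum $\sum_i\delta\,\E_B[\sup_{[\tau_i,\tau_{i+1}]}|f|^2]$ to the integral $\int_0^t\E_B[\sup_{[0,r]}|f|^2]\,\ud E_r$ without losing the optimal $\delta$-order; this will follow from $\E_B[\sup_{[\tau_i,\tau_{i+1}]}|f|^2]\le \E_B[\sup_{[0,r]}|f|^2]$ for $r\in[\tau_{i+1},\tau_{i+2}]$ together with the absorption of at most one extra $\delta$-interval into the constants.
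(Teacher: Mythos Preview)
The paper omits the proof of this lemma and refers to Lemma~10.8.1 of Kloeden--Platen, and your proposal is precisely the time-changed adaptation of that argument: telescoping the pieces into a single outer integral, splitting according to whether $\alpha$ contains a $\ud B_{E}$-integrator, applying the discrete-martingale structure plus Doob/BDG together with Lemma~\ref{single} when it does, and Cauchy--Schwarz plus Lemma~\ref{single} when it does not. The two technical points you flag---the conditional-Fubini verification of the martingale property when $j_\ell=0$ but some inner $j_i=1$, and the index shift needed to pass from the Riemann-type sum to $\int_0^t\E_B[\sup_{[0,r]}|f|^2]\,\ud E_r$---are exactly the places where the Kloeden--Platen proof requires care in this setting, and both go through as you describe.
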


\noindent\textit{Proof of Theorem \ref{HighEU}}\ \ 
Let $Z_t^2:=\sup_{0\leq s\leq t}|X_s-X^\delta_s|^2$. 
By the Cauchy-Schwartz inequality, 
\begin{align*}
Z_t^2&\leq 2|\mathcal{A}_\gamma|\sum_{\alpha\in\mathcal{A}_\gamma\backslash v}\sup_{0\leq s\leq t}\Bigg|\sum_{i=0}^{n_s-1}
I_\alpha\left(f_\alpha(E_{\tau_i}, X_{\tau_i})-f_\alpha(E_{\tau_i},X^\delta_{\tau_i})\right)_{\tau_i,\tau_{i+1}}\\
&\ \ \ \ \ \ \ \ \ \ \ \ \ \ \ \ \ \ \ \ \ \ \ \ \ \ \ \ \ \ \ \ \ +I_\alpha\left(f_\alpha(E_{\tau_{n_s}}, X_{\tau_{n_s}})-f_\alpha(E_{\tau_{n_s}},X^\delta_{\tau_{n_s}})\right)_{\tau_{n_s},s}\Bigg|^2\\
& +2|\mathcal{R}(\mathcal{A}_\gamma)|\!\!\sum_{\alpha\in\mathcal{R}(\mathcal{A}_\gamma)}\!\sup_{0\leq s\leq t}\Bigg|\!\sum_{i=0}^{n_s-1}
I_\alpha\left(f_\alpha(E_\cdot, X_\cdot)\right)_{\tau_i,\tau_{i+1}}\!+\!I_\alpha\left(f_\alpha(E_\cdot, X_\cdot)\right)_{\tau_{n_s},s}\!\Bigg|^2.
\end{align*}
By Assumption $\H_6$, $\sup_{0\leq \sigma\leq r} |f_\alpha(E_\sigma,X_\sigma)-f_\alpha(E_\sigma, X^\delta_\sigma)|^2\leq h_\alpha^2(E_r)Z_r^2$ for all $\alpha \in\mathcal{A}_\gamma\backslash v$ and $\sup_{0\leq \sigma\leq r} |f_\alpha(E_\sigma,X_\sigma)|^2\leq 2k_\alpha^2(E_r)Y_r^{(2)}$ for all $\alpha\in\mathcal{R}(\mathcal{A}_\gamma)$. 
By an argument analogous to the one given 
for obtaining \eqref{t2} and \eqref{0003}, together with Lemma \ref{multi} and the inequality $\ell(\alpha)-n(\alpha)+1\le 2\gamma+2$ valid for any $\alpha\in \mathcal{A}_\gamma\cup \mathcal{R}(\mathcal{A}_\gamma)$,
\begin{align*}
\E_B[Z_t^2]&\leq2(4b_2^{2\gamma+2}+E_T)|\mathcal{A}_\gamma|\sum_{\alpha\in\mathcal{A}_\gamma\backslash v} h_\alpha^2(E_t)\delta^{\phi(\alpha)}\int_0^t\E_B[Z_r^2]\ud E_r\\
&\ \ \ +4E_T(4b_2^{2\gamma+2}+E_T)\E_B[Y_T^{(2)}]|\mathcal{R}(\mathcal{A}_\gamma)|\sum_{\alpha\in\mathcal{R}(\mathcal{A}_\gamma)} k_\alpha^2(E_t)\delta^{\phi(\alpha)},
\end{align*}
where $Y_T^{(2)}$ is defined in Proposition \ref{Yp}.
Note that the least value of $\phi(\alpha)$ for $\alpha\in\mathcal{R}(\mathcal{A}_\gamma)$ is $2\gamma$, which is achieved when $\l(\alpha -)+n(\alpha -)=2\gamma$, $j_\ell=1$ and $\l(\alpha)\ne n(\alpha)$. 
Thus, by the Gronwall-type inequality, 
\begin{align*}
\E_B[Z_T^2]&\leq cE_T(4b_2^{2\gamma+2}+ E_T)\E_B[Y_T^{(2)}]\xi_6(E_T)e^{c\xi_7(E_T)(2b_2+2+ E_T)E_T}\delta^{2\gamma},
\end{align*}
where $c>0$ is a constant,  
$
	\xi_6(u):=\sum_{\alpha\in\mathcal{R}(\mathcal{A}_\gamma)} k_\alpha^2(u)=\sum_{\alpha\in\mathcal{R}(\mathcal{A}_\gamma)} e^{2\log k_\alpha(u)},
$
and 
$
	\xi_7(u):=\sum_{\alpha\in\mathcal{A}_\gamma\backslash v} h_\alpha^2(u).
$	
The desired results now follow as in the last paragraph of the proof of Theorem \ref{Mil0}.\qed 

\section{Simulations}\label{section_6}
This section presents two numerical examples which verify the rates of convergence obtained in Theorems \ref{EU} and \ref{Mil0} in Section \ref{section_4}, respectively.
We use a $0.8$-stable subordinator $D$ and simulate a sample path of $X^\delta$ on a time interval $[0,T]$ as follows.  
First, simulate $D$ at the discretization points $\{0,\delta, 2\delta,\cdots\}$ and stop this procedure upon finding an integer $N$ satisfying $T\in[D_{N\delta}, D_{(N+1)\delta})$. 
Second, simulate the Brownian motion $B$ at the discretization points $\{0,\delta, 2\delta,\cdots, N\delta\}$.
Finally, calculate $X^{\delta}_{n\delta}$ for $n=0,1,2,\cdots,N-1$ using the approximation scheme in \eqref{def-EM} and let $X^{\delta}_{t}:=X^{\delta}_{n\delta}$ whenever $t\in[D_{n\delta},D_{(n+1)\delta})$.

Consider the two SDEs 
\begin{align}
X_t&=1+\int_0^t\sqrt{1+E_r}\,\ud r+\int_0^t\sqrt{1+E_r}X_r\,\ud E_r+\int_0^t\sqrt{1+E_r}X_r\,\ud B_{E_r}\label{Ex1};\\
Y_t&=1+\int_0^t\sqrt{1+E_r}\,\ud r+\int_0^t\sqrt{1+E_r}Y_r\,\ud E_r+\int_0^t\sqrt{1+E_r}\,\ud B_{E_r}.\label{Ex2}
\end{align}
It is easy to verify that the coefficients of SDEs \eqref{Ex1} and \eqref{Ex2} satisfy the conditions of Theorems \ref{EU} and \ref{Mil0} with $\theta=1$, respectively. 
To examine the order of convergence for SDE \eqref{Ex1}, 
as in \cite{DengLiu2020}, we regard the numerical solution with the fixed step size $\delta_0=2^{-15}$ as the true solution. 
For each fixed $\delta\in \{2^{-14},\,2^{-13},\dots,\,2^{-8}\}$, 
we generate $100$ paths for the true solution $X^{\delta_0}$ and 100 paths for the approximated solution $X^\delta$. We then calculate the following error at the time horizon $T=1$:
\begin{align*}
ERROR(X,\delta)&:=\frac{1}{100}\sum_{i=1}^{100}|X^{\delta}_T(\omega_i)-X^{\delta_0}_T(\omega_i)|.
\end{align*}

\begin{figure}
\centering
\vspace{-140pt}
    \includegraphics[width=3.9in]{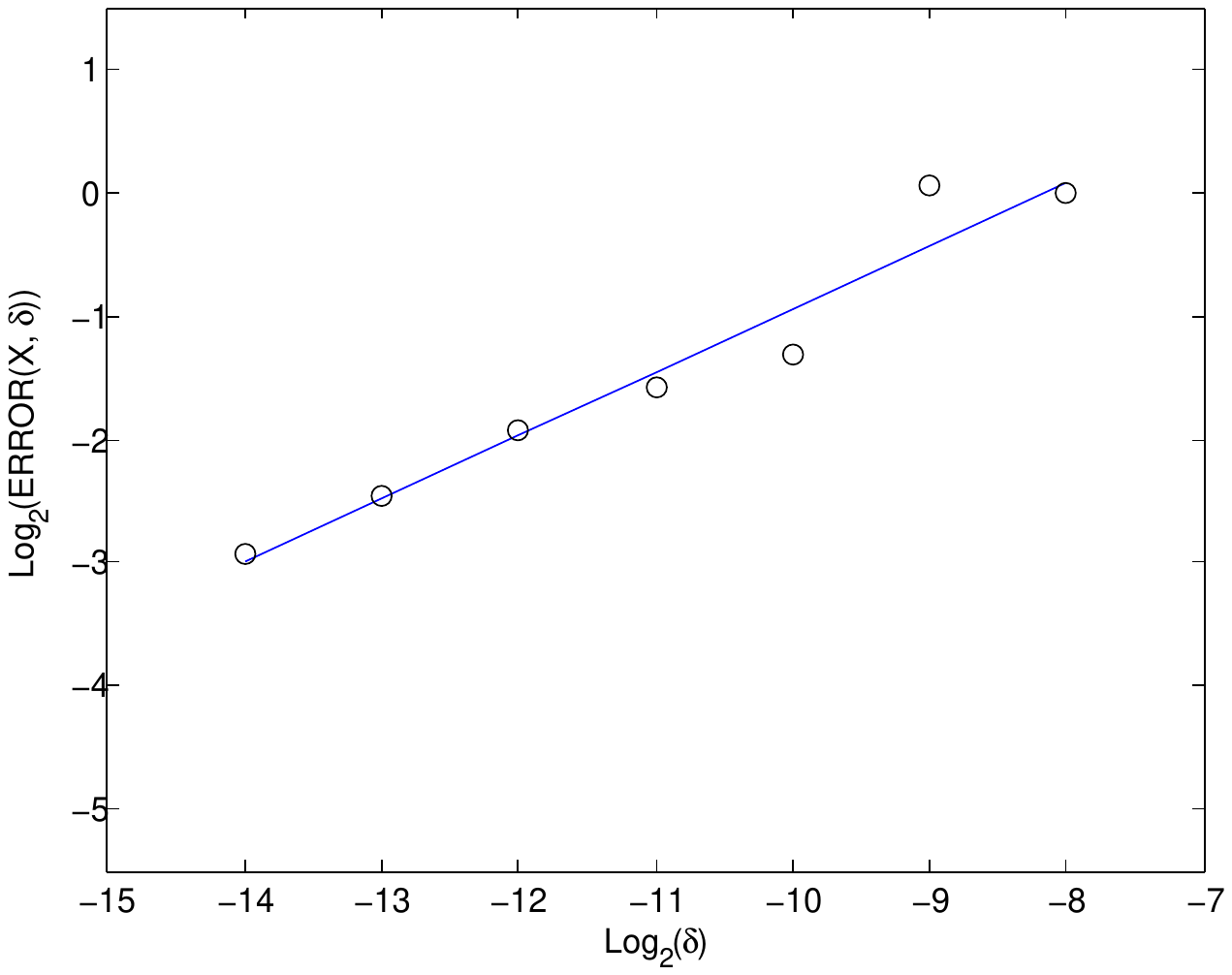}
\vspace{-120pt}
    \caption{Plot of $\log_{2}(ERROR(X,\delta))$ versus $\log_{2}\delta$ with the least squares line $y= 0.5138x+4.1982$.}
    \label{F1}
\vspace{-110pt}
 \includegraphics[width=3.9in]{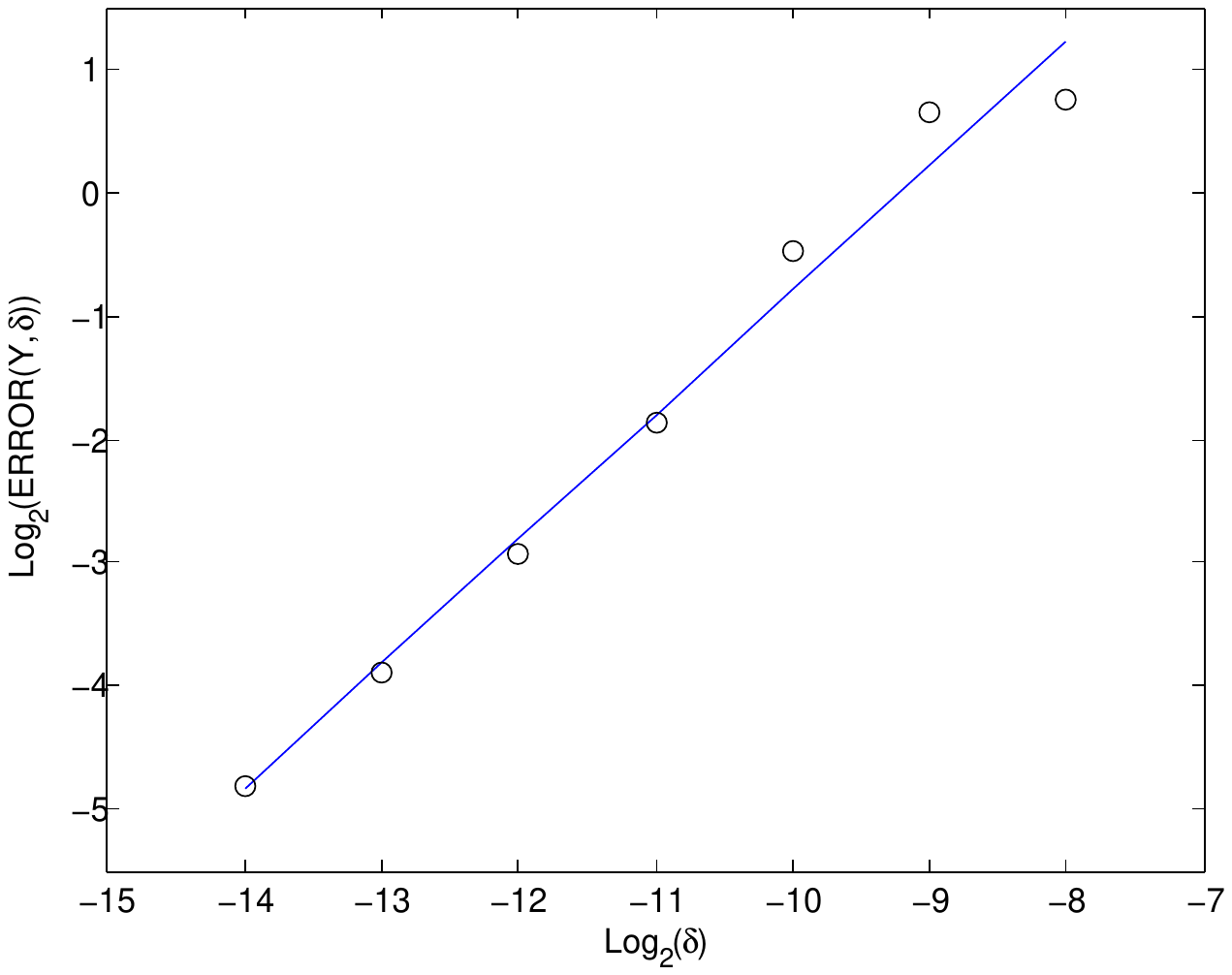}
\vspace{-120pt}
   \caption{Plot of $\log_{2}(ERROR(Y,\delta))$ versus $\log_{2}\delta$ with the least squares line $y= 1.0088x+ 9.2991$.}
    \label{F2}
\end{figure}

Figure \ref{F1} gives a plot of $\log_{2}(ERROR(X,\delta))$ against $\log_{2}(\delta)$. It shows a linear trend with least squares slope being 0.5138, which is close to 0.5, the largest possible slope suggested by Theorem \ref{EU}. 
On the other hand, for SDE \eqref{Ex2}, Figure \ref{F2} gives a plot of $\log_{2}(ERROR(Y,\delta))$ against $\log_{2}(\delta)$, which again presents a linear trend but with least squares slope being 1.0088. The latter is close to 1 as suggested by Theorem \ref{Mil0}.

\bigskip
\textbf{Acknowledgements:}
The authors are very grateful to the anonymous referees and the associate editor for their comments and suggestions that helped improve the exposition of the paper and make some statements precise.

\end{document}